 \journalname{} 
\begin{document}

\title{Elliptic boundary value problems  with  Gaussian white noise loads}

\thanks{This work has been funded 
by Academy of Finland (application number 250215, Finnish Programme for Centre of Excellence in Research 2012-2017) and European Research Council (ERC advanced grant 267700 - Inverse problems). }
%about the article that should go on the front page should be
%placed here. General acknowledgments should be placed at the end of the article.}

%\subtitle{Do you have a subtitle?\\ If so, write it here}

%\titlerunning{Short form of title}        % if too long for running head

\author{Sari Lasanen         \and
        Lassi Roininen \and Janne M.J. Huttunen %etc.
}

%\authorrunning{Short form of author list} % if too long for running head

\institute{S. Lasanen \at
              Mathematical Sciences,
              P.O.Box 3000,
           FI-90014   University of Oulu,
           Finland    
            \email{sari.lasanen@oulu.fi}           %  \\
%             \emph{Present address:} of F. Author  %  if needed
\and 
Lassi Roininen\at 
 Tallinn University of Technology,
  Department of Mathematics,
  Ehitajate tee 5,
19086 Tallinn,
   Estonia
           \and
           Janne M.J. Huttunen,
               University of Eastern Finland, Department of Applied Physics,
                Yliopistonranta 1 F,
                 FI-70211 Kuopio,
                 Finland
                }

\date{} 
% The correct dates will be entered by the editor

\maketitle

\begin{abstract}
Linear second order  elliptic boundary value problems (BVP)
on   bounded Lipschitz domains are studied in the case of   Gaussian white noise loads.
Especially, Neumann and Robin BVPs are considered.   

The main obstacle for applying the usual variational approach   is that the  Gaussian white noise has  irregular realizations. In particular, the corresponding  Neumann boundary values are not well-defined in the ordinary sense.  

In this work, the BVP is reformulated  by replacing the continuity of the   boundary trace mapping with measurability.  Instead of  using variational methods alone,  the reformulation  of the BVP derives also from  Cameron-Martin space techniques.
The reformulation essentially returns the study of irregular white noise loads  to  study of   $L^2$-loads.   

Admissibility of the reformulation is demonstrated  by showing that  usual  finite element approximations of the BVP  with  discretized white  noise loads converge to the solution of the   reformulated  problem.   For Neumann and Robin  BVPs, the finite dimensional approximations have been utilized before. However,  here also the infinite-dimensional limit is considered.

%Include keywords, PACS and mathematical
%subject classification numbers as needed.
\keywords{Stochastic partial differential equations \and  Boundary value problems \and Gaussian measures \and  Finite element methods \and White noise}
% \PACS{PACS code1 \and PACS code2 \and more}

 \subclass{34B05\and  60B10\and  60H15 \and  60H35   \and 65N30}
\end{abstract}

\section{Introduction}
\label{intro}

Let $D$ be a bounded Lipschitz domain in $\mathbb R^d$, where the dimension $d\geq 2$, 
and denote with $\partial _n$ the normal derivative at the boundary of $D$. We study  a stochastic counterpart of a  boundary value problem (BVP) 
\begin{eqnarray}
\begin{split} 
  \left\{
  \begin{array}{r l l}
- \Delta u  + \lambda u   &=  f   & {\rm   \,  in\,  } D\\
  Bu  &=  0  & {\rm  \, on\,  } \partial D,
 \end{array}
 \right.
 \end{split}\label{eq:bvp1}
 \end{eqnarray}
 where $f\in H^{-1}(D)$ is replaced with  the Gaussian zero mean  white noise $\dot W$  on $D$ and  
   the boundary operator $B$ is  either of the  Dirichlet type ($Bu=u|_{\partial D}$ ), Neumann type ($Bu= 
   \partial_n u |_{\partial D}$), or the  Robin type ($Bu=  \partial_ n  u |_{\partial D } +\beta u |_ {\partial D}$ for $\beta \in \mathbb R$).  The constant $\lambda$ in 
   \eqref{eq:bvp1} is  positive for simplicity.

 The study of  stochastic elliptic  boundary value problems  \eqref{eq:bvp1}
initiated   from  works of Walsh  \cite{walsh2,walsh}  who considered solvability of the Poisson equation with zero Dirichlet boundary  condition and the white noise source. Walsh  studied the  very  weak formulation  of   \eqref{eq:bvp1}  in   the sense of generalized functions, that is,  distributions.  

In the case of  the homogeneous Dirichlet boundary condition $Bu= u|_{\partial D}=0$ and  the  white noise load $f= \dot W$,  
 the existence and uniqueness of  pathwise
  continuous solution of  \eqref{eq:bvp1}  is well-known for $d=1,2,3$, even for nonlinear equations  by results of Buckdahn and Pardoux \cite{pardoux}. The corresponding Neumann and  Robin problems are less extensively studied, although there are  numerous studies on  elliptic   BVPs with more    regular  deterministic loads. This leaves a gap in the literature which appears  for example in connection with Bayesian statistical inverse problems, where solutions of  stochastic BVPs serve as priors \cite{MR3209311}.  The aim of this work is to provide  a rigorous description of the stochastic BVP with the white noise load that utilizes both the stochastic nature of the problem and the existing literature 
on  more regular  problems. 
  
  The main difference between stochastic Dirichlet and Neumann problems is the specification of the solution space. In  \cite{pardoux},  one seeks  a stochastic field $X$ with continuous realizations that satisfies $-\Delta X +\lambda X= \dot W$ in the sense of distributions, and $X|_{\partial D}=0$.   In corresponding  Neumann and Robin problems 
  the normal derivative at the boundary is not well-defined when only  continuity of the  realizations has been verified, which is the main obstacle  for  formulating the problem.   Indeed, the weak definition  of the (co)normal derivative $\partial_n  u$ at the boundary $\partial D$ of the variational  solution $u$ of 
  \eqref{eq:bvp1}   requires  that functionals  
 \begin{equation}\label{eq:normald}
\int_{\partial D}  \phi \partial _n  u \, d \sigma 
:= \int _D \nabla  u \cdot \nabla \phi  +  \lambda u \phi  - f \phi \, dx  
\end{equation}
are well-defined  for  all $\phi$ in a suitable function space $H$,  which e.g.  is satisfied when     
  $u\in H^1(D)=:H $ and  $f \in L^2(D)$  (see for example  \cite{mclean}).

There are several studies on how  to extend an elliptic BVP to   irregular loads or irregular  boundary data.  In     \cite{babuska2} and references therein,   BVPs are taken to be deterministic with  no loads  but  highly irregular boundary values. Obviously, the above problem 
  can be cast in such a form. For smooth boundaries, the several proposed extensions in \cite{babuska2} work 
 nicely but for polygonal domains turn out to be problematic.  A similar theme can be found in 
\cite{benfatto}.   Rozanov \cite{rozanov} treats random fields as  Hilbert space processes, and applies theory of distributions in defining the boundary traces for $C^2$-smooth boundaries. 
Smoothness of boundaries benefits the definition of  distributions on the boundary. 
 An attempt to solve  the Neumann boundary value problem with the help of Lax-Milgram theorem  is made in \cite{franklin}. However, the paper does not take into account  that some of the stochastically integrated  functions   are anticipating  which suggests that correct formulation would involve   multidimensional  Skorohod integrals.  Also the interpretation of the normal derivative is left  vague.  A correct formulation with more regular loads can be found  e.g. in  \cite{babuska,swap},  but it is  clear that the  white noise  loads do not fulfill the required conditions.   The work in 
 \cite{franklin} can be appreciated from the point of view of  a more pragmatic question, which asks whether the white noise could be approximated  by more regular stochastic fields  (in the sense of an existing limit).   
 
  The approximations of white  noise  in Dirichlet problems are often carried out together with   finite element methods  \cite{allen,ben,cao,cao2,franklin,swap,zhang}.  Also, the convergence of approximative solutions has been verified \cite{allen,ben,cao,cao2,swap,zhang}.

  In \cite{pardoux}, the homogeneous Dirichlet solution is acquired by  replacing  the BVP  with a Hammerstein integral equation.  A similar integral equation could be written in the  Neumann or Robin case (see \cite{caoscat})  by updating  the Dirichlet  Green's function $G(x,y)$ with a correct  boundary value. In the linear case, the  conjectured integral equation would  be    $X(x)  + \lambda  \int_D  G(x,y ) X(y)  dy =  \int_D  G(x,y ) dW_y $ for a stochastic field $X$  with a.s. continuous realizations,  where $ d W_y$ represents  multidimensional It\={o} integral.
 This is referred to as  the mild form of the problem. However, it is not clear  whether the realizations of $X$  would fulfill the boundary condition $\partial_n X |_{\partial  D}=0$ in any other  than mild sense. For smooth domains,  a partial answer can be found in \cite{babuska2} for  the description of the BVP, where such a formulation is compared to another generalization of  irregular boundary values.

We proceed in different direction than in \cite{babuska2,benfatto}.    
 Instead of trying to stretch  the definition of the differentiability, we stretch the definition of the boundary  trace with measure theoretic methods.      Indeed, replacing $ f$ in \eqref{eq:normald}  with $L^2$-approximations of the white noise hints  that a rigorous definition of  the normal derivative  of $X$  might not call for  continuity of the linear forms \eqref{eq:normald}  on $H^1(D)$  but  only  measurability.  Similar phenomenon appears in the variational formulations of  BVPs with different boundary conditions.  For  $f\in L^2(D)$, the variational form of  the homogeneous  Dirichlet BVP is to find $u\in H^1_0(D)$ that satisfies
   \begin{eqnarray}\label{eq:varsol}
\int  _D f(x)   \psi  (x)  d x     =   \int _D  \nabla  u (x)  \cdot \nabla \psi (x) + \lambda  u (x) \psi (x)  dx 
   \end{eqnarray}
 for all $\psi \in H^1_0(D)$.     Replacing  $f$ with regular approximations of white  noise hints again   to   measurability of the linear forms.  Indeed, in the case of  homogeneous Dirichlet problem, such approximative
   variational  solutions  are known  to converge in $L^2(\Omega, \Sigma,P; L^2(D))$-norm   to the correct solution   \cite{cao}.  The corresponding limit of the variational equations,  when refining white  noise approximations is then 
   \begin{eqnarray}
 \int _D \psi(x) d W_x  = \lim _{n\rightarrow \infty  } \int \nabla  X_n  \cdot \nabla\psi  +  
 \lambda X_n  \psi dx  
\end{eqnarray}
for every $\psi \in H^1_0 (D)$, where $ d W_x$ represents  multidimensional It\=o integral and 
$X_n$ are variational solutions of \eqref{eq:varsol} with the  approximated white noise.

     The present paper contributes in this area by  giving explicit formula  for the normal derivative  of the solution of \eqref{eq:bvp1}  as a measurable mapping (see Definition \ref{def:new}).   Instead of tackling directly the variational formulations of general BVP or trying to interpret the normal derivative in distributional sense, we 
   reformulate the irregular elliptic BVP  so that existing results for more regular   elliptic BVP can be  easily utilized. The approach also avoids the need to provide new estimates for the     corresponding Green's functions, as is often the case in mild formulations. For example, continuity of the solution of two and three dimensional   Neumann and Robin    problems  with the  white noise  load follows from the  regularity of  the  deterministic problem via    well-known Gaussian arguments. Moreover,  unique solvability of high-dimensional problems is    also guaranteed.
   
The reformulation of BVP involves Cameron-Martin space techniques. The main 
tool is  the method of extending  continuous linear mappings  $L$ on the Cameron-Martin space of a Gaussian field  $X\sim \mathcal N(0, C_X)$ into measurable linear mappings $\widehat L$ on the sample space of $X$ (see e.g. \cite{bog}).  Measurable linear extensions are applied   in defining the boundary operators for the Gaussian fields.  T   
  
In order to demonstrate  admissibility of the reformulation, we show that 
finite element approximations $X_n$  of the solution $X$ converge to the solution of 
the problem. The proof reduces essentially to a one-liner \eqref{eq:oneliner}, even for high-dimensional problems.    

 The main approach  to finite element methods (FEM) with irregular stochastic loads   was introduced in \cite{allen}, 
 where the stochastic load $f$ is first approximated by a spatially piecewise constant function, and then the ordinary FEM is applied (see 
 also  \cite{cao,franklin}).  However,  even in 1D case the solutions of \eqref{eq:bvp1} with the white noise load are  not regular  enough for 
 standard pathwise error methods \cite{allen}.   The convergence of FEM  approximations is therefore recast as a question of   convergence of random variables, where several other modes of convergence are available besides 
 to pathwise convergence.       From previous studies  \cite{cao} it is known that  random fields $X_n$ converges to $X$  in   norm   \begin{equation}\label{eq:norm}
 \Vert X\Vert : = \left( E \Vert X \Vert_{L^2(D)}^2\right)^\frac{1}{2}
 \end{equation} 
 for  2D  Dirichlet problem.   In  \cite{cao2},   3D  case on a  convex smooth domain is considered.   Also,  estimates for  the speed of convergence are known \cite{cao}.     
  Other similar works are \cite{allen,qi,franklin,zhang}. More regular loads are considered in  \cite{babuska,swap}. 
     
  We improve the previous  results by replacing $L^2(D)$ with 
$C(D)$ and giving generalization to cases of Neumann and Robin boundary data.
The cases of Neumann and Robin boundary conditions are  new.

  The contents of this paper is the following. 
  In Section  \ref{sec:1} we recall  known results  
  about Gaussian random variables and their linear  transformations.
   In Section \ref{sec:mbo} we define the measurable boundary 
   trace  and measurable normal derivatives (see Definition \ref{def:new}). In Section \ref{sec:4} we formulate the  BVP  and 
   study  its unique solvability.  In Section \ref{sec:5} the 
   regularity of the solutions is considered. In Section \ref{sec:6}
    the finite element approximations are studied.
      
 \section{Measure theoretic preliminaries}\label{sec:1}

Let $(\Omega,\Sigma,P)$ denote a complete probability space. We make a standing assumption that 
all random variables are defined on $(\Omega,\Sigma,P)$. Moreover, all appropriate  function spaces appearing below 
are endowed with their Borel $\sigma$-algebras. We will denote with 
$H^s(D)$, $s\in\mathbb R$,  the usual Sobolev spaces  on $D$ and 
with $H^s_0(D)$ the usual closure of compactly supported smooth functions on 
$D$ (see e.g.  \cite{mclean}).

In this work, we extensively use the theory of  Gaussian function-valued random variables  and their
linear functionals. As an introduction to present ideology, we recall the basic  definitions in the case of white noise.  
 
 Let  $\mathcal B(D)$ denote the Borel sets  of a bounded  Lipschitz domain $D\subset\mathbb R^d$.
    Recall, that  $\dot W$ is the white noise on $D$   if  $\{\dot W(A): A \in \mathcal B(D)\}$ are  Gaussian random variables with zero mean and covariance  $\mathbb E \dot W(A) \dot W(B)  = |A\cap B|$, where $| A|$ denotes the Lebesgue's measure of   $A$, and $\dot W(A\cup B)=\dot W(A) +\dot W(B)$ a.s. for disjoint $A$ and $B \in \mathcal B(D)$.   A common way to construct functionals   $\dot W(A)$ is through stochastic integrals 
   $$
   \dot W(A) = \int 1_A(x) dW_x,  
   $$
  with respect to $d$-dimensional  Wiener field $W_x$, which is  a Gaussian field with zero mean and covariance 
  $\mathbb E W_x W _y = \min(x_1,y_1)\cdots \min (x_d,y_d)$ for  all   $ x=(x_1,\dots,x_d), y=(y_1,\dots, y_d)\in \mathbb R^d$. 
  The It\=o isometry allows us to replace  characteristic functions $1_A$ of Borel sets $A\in\mathcal B(D)$   by  functions  $\phi \in L^2(D)$, and hence 
  define white noise  functionals 
  \begin{equation}\label{wn}
  \dot W(\phi):= \int_ D \phi(x) dW_x
  \end{equation}
as  Gaussian random variables with zero mean and variance $ \mathbb E \dot W(\phi)^2 = \Vert \phi\Vert_{L^2(D)}^2$.

Instead of considering  solutions for the elliptic 
boundary value problem as stochastic fields, we take the more 
general approach by considering solutions (and the white noise)  as   Banach space valued 
random variables.  Let us recall some definitions (e.g.  \cite{bog,bogd}).

 Let $\mathbb B$ be a  separable Banach space.  A mapping $X:\Omega \rightarrow \mathbb B$ is   a $\mathbb B$-valued random variable   if $X^{-1}(A)\in  \Sigma$ for   all Borel sets $A\subset \mathbb B$.  Denote $\mu_X=P\circ X^{-1}$ the image 
  measure of $X$ on $\mathbb B$.  Let $\mathbb B^*$ denote the topological 
  dual of $\mathbb B$ and $\left < \cdot  ,\cdot \right >_{\mathbb B,\mathbb B^*}$ denote the duality.          A  $\mathbb B$-valued random variable   $X$ is called Gaussian if $\langle X,b^* \rangle_{\mathbb B, \mathbb B^*} $ is  Gaussian for all $b^* \in\mathbb  B^*$.

 For notational simplicity, we focus on reflexive $\mathbb B$. 
In the case of reflexive $\mathbb B$, we denote 
with $m\in \mathbb B$ the mean of  $X$ i.e.
$$
\langle m ,  b^* \rangle_{\mathbb B, \mathbb B^*} = \mathbb E   \langle X, b^* \rangle _{\mathbb B, \mathbb B^*}   
$$
  for all $b^* \in \mathbb B^*$ and with $C_X: \mathbb B^*\rightarrow \mathbb B $ the covariance operator of $X$   i.e. 
  $$
\langle C_X b^* , b^* \rangle _{\mathbb B, \mathbb B^*} = \mathbb E    \langle X-m,b^*\rangle_{\mathbb B, \mathbb B^*}   \langle X-m,b^* \rangle_{\mathbb B, \mathbb B^*}         $$
for all $b^* \in \mathbb B^*$.

Next, we recall that the  white noise $\dot W$ is $H^{-d/2-\epsilon}(D)$-valued  Gaussian 
random variable for any $\epsilon>0$. Indeed,  almost sure realization properties of the $d$-dimensional white noise can be derived by using random functionals  $\dot W(\phi)$. The definition of the stochastic integral helps in  identifying realizations of  the white noise as  weak  derivatives of  realizations of the Wiener field. Then the random functional $\dot W (\phi)$ can be   identified with  the linear functional $\langle \dot W, \phi\rangle$ between  a distribution  $\dot W$ and a test function $\phi \in C^\infty_0(D)$.  It is an easy task to apply duality to  study   Sobolev norms   $ \Vert \dot W\Vert_{H^{-s}(D)} = (\sum_{k=1}^\infty  |\langle \dot W , \phi_k\rangle_{H^{-s} (D), H^{s} (D)}|^2)^\frac{1}{2}$ where   $\phi_k\in C^\infty _0(D)$  form an orthonormal basis  in  $H^{s}_0(D)$ and  $s\geq 0$.  Recall, that orthonormal basis can be chosen from a dense set, and the  dual of  $H^{-s}(D)$ can be identified with $H^s_0(D)$ (see Theorem 3.30 in \cite{mclean}).  In particular,   $\dot W $  belongs a.s.  to the Sobolev space $H^{-d/2-\epsilon } (D)$ for  any $\epsilon>0$, since  the series of  variances $\sum_{k=1}^\infty  \Vert \phi_k\Vert ^2_{L_2(D)}$ converges (see 
 \cite{kahane}, Theorem  2 in Chapter. 3)  by  Maurin's theorem (e.g.  \cite{MR0162126}). Similarly, $\dot W$ belongs  to  $H^{-d/2}(D)$ with probability zero. 

  The measurability  of white  noise can  be checked by the well-known Pettis' measurability theorem,  which says that  a $\mathbb B$-valued mapping is a $\mathbb B$-valued random variable, if it is weakly measurable i.e. mappings $\langle X,b^* \rangle$ are random variables for all $b^* \in\mathbb 
B ^*$. Hence,  $\dot W$ is $H^{-d/2-\epsilon}(D)$-valued  Gaussian 
random variable for any $\epsilon>0$.

 White  noise $\dot W$  has mean zero and
 identity as the covariance operator.

\begin{definition}\label{def:ML}
Let $\mathbb B$ be a separable reflexive Banach space and let $X$ be a Gaussian  $\mathbb B$-valued zero mean random variable whose  covariance operator  $C_X$
 is nontrivial.
Set
$$
\Vert b^* \Vert_{\mu_X} := \sqrt{\langle C_X b^* , b^* \rangle_{\mathbb B, \mathbb B^*} }  
$$
for all $b^* \in \mathbb B^*$ and denote with 
$
\mathbb B^*_{\mu_X}
$ the closure of  $\mathbb B^*$ in  norm the $\Vert \cdot \Vert _{\mu_X}$.
\end{definition}    
It is well-known  that the 
elements of $\mathbb B^*_{\mu_X}$ can be identified with  $\mu_X$-measurable linear functionals on 
$\mathbb  B$.   More precisely,  a functional    on $\mathbb B $ is  a $\mu$-measurable linear 
functional if  it is  $\mu-$measurable and it has a version that  is linear on a  linear subspace  of  full $\mu$-measure. The measurability of  $\widehat h  \in \mathbb B^*_{\mu_X}$ can be seen as follows. For every $\widehat h \in \mathbb B^*_{\mu_X}$ there exists 
a sequence $\{b^*_k \}\subset \mathbb  B^*$ so that  $\lim_{k\rightarrow \infty}
 b^*_k= \widehat  h$ in $\mathbb B _{\mu_X}$. But then  the linear functionals
 $$
b\mapsto \langle b, b^* _k\rangle_{\mathbb B, \mathbb B^*},  k \in\mathbb N, 
$$
form a Cauchy sequence in $L^2(\mu_X)$. By taking a suitable subsequence, we obtain 
$\mu_X$-a.s. limit  
\begin{equation}\label{eq:mfun}
b\mapsto \lim_{j\rightarrow \infty}  \langle b, b^* _{k_j}\rangle_{\mathbb B, \mathbb B^*}   =: 
  \widehat {h} (b) 
\end{equation}
and 
$$
\Vert \widehat h - b_{k_j}\Vert^2_{\mu_X}= \mathbb E  \left(  \widehat h(X) - \langle X, b_{k_j}^* \rangle _{\mathbb B, \mathbb B^*} \right)^2.  
$$
Each $ \widehat {h}\in \mathbb B^*_{\mu_X}$ defines a measurable functional \eqref{eq:mfun} which are linear on a full measure linear subspace (for details, 
see Theorem 2.10.9  and Theorem 3.2.3 in \cite{bog}). We summarize the 
above facts in the next lemma.
\begin{lemma}\label{remark:gaussian}
The elements  $\widehat  h$ of  $ \mathbb B^*_{\mu_X}$ can be identified with 
$\mu_X$-measurable linear functionals $b\mapsto \widehat h(b)$ that are 
Gaussian zero mean random variables on  the Lebesgue's completion of the  probability space $(\mathbb B, \mathcal  B(\mathbb B), \mu_X)$.
 Moreover, the   covariance  
$$
(\widehat h, \widehat g)_{\mu_X}  := \left(  \int  \widehat  h (b) \widehat g (b) \mu_X(db) \right),
 $$ 
where $\widehat h,\widehat g\in  \mathbb B^*_{\mu_X}$,  defines an inner product on 
$\mathbb B^*_{\mu_X}$ and $(\widehat h, \widehat h)_{\mu_X}=\Vert \widehat h\Vert _{\mu_X}^2$.
\end{lemma}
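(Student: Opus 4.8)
The plan is to realise $\mathbb B^*_{\mu_X}$ isometrically inside $L^2(\mu_X)$ and to read off every claim from that embedding. First I would record the basic isometry: for $b^*\in\mathbb B^*$ the definition of the covariance operator (with $m=0$) together with $\mu_X=P\circ X^{-1}$ gives
$$
\Vert b^*\Vert_{\mu_X}^2=\langle C_X b^*,b^*\rangle_{\mathbb B,\mathbb B^*}=\mathbb E\,\langle X,b^*\rangle_{\mathbb B,\mathbb B^*}^2=\int_{\mathbb B}\langle b,b^*\rangle_{\mathbb B,\mathbb B^*}^2\,\mu_X(db),
$$
so that the map $\iota$ defined by $\iota(b^*)=\langle\cdot,b^*\rangle_{\mathbb B,\mathbb B^*}$ is a linear isometry from $(\mathbb B^*,\Vert\cdot\Vert_{\mu_X})$ into $L^2(\mu_X)$. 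Since $L^2(\mu_X)$ is complete and $\mathbb B^*$ is dense in $\mathbb B^*_{\mu_X}$ by definition, $\iota$ extends uniquely to a linear isometry $\overline\iota\colon\mathbb B^*_{\mu_X}\rightarrow L^2(\mu_X)$, and for $\widehat h\in\mathbb B^*_{\mu_X}$ with an approximating sequence $b^*_k\rightarrow\widehat h$ in $\Vert\cdot\Vert_{\mu_X}$ the image $\overline\iota\,\widehat h$ is the $L^2(\mu_X)$-limit of $\langle\cdot,b^*_k\rangle_{\mathbb B,\mathbb B^*}$. Passing to an almost surely convergent subsequence as in \eqref{eq:mfun} yields a concrete $\mu_X$-measurable representative $b\mapsto\widehat h(b)$ of this limit, which is the functional to be identified with $\widehat h$.

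Next I would verify the Gaussian and linearity properties. Under $\mu_X=P\circ X^{-1}$ each $\langle\cdot,b^*_k\rangle_{\mathbb B,\mathbb B^*}$ is distributed like the centered Gaussian variable $\langle X,b^*_k\rangle_{\mathbb B,\mathbb B^*}$, and centered Gaussians are closed under $L^2$-convergence since their characteristic functions $\exp(-\tfrac{1}{2}\Vert b^*_k\Vert_{\mu_X}^2 t^2)$ converge; hence the limit $\widehat h$ is again a zero mean Gaussian random variable on the Lebesgue completion of $(\mathbb B,\mathcal B(\mathbb B),\mu_X)$. The approximants are linear in $b$, so the subsequential limit is linear wherever it converges; upgrading this to linearity on a measurable linear subspace of full $\mu_X$-measure is exactly what Theorems 2.10.9 and 3.2.3 of \cite{bog} supply, and I would cite them rather than reprove the measurable-linear-extension machinery.

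Finally I would treat the inner product. Because $\widehat h,\widehat g\in\mathbb B^*_{\mu_X}$ correspond to $\overline\iota\,\widehat h,\overline\iota\,\widehat g\in L^2(\mu_X)$, the bilinear form $(\widehat h,\widehat g)_{\mu_X}=\int\widehat h(b)\widehat g(b)\,\mu_X(db)$ is the pullback of the $L^2(\mu_X)$ inner product along $\overline\iota$, so symmetry, bilinearity and nonnegativity are inherited from $L^2(\mu_X)$. Taking $\widehat g=\widehat h$ and using that $\overline\iota$ is an isometry gives $(\widehat h,\widehat h)_{\mu_X}=\Vert\overline\iota\,\widehat h\Vert_{L^2(\mu_X)}^2=\lim_k\Vert b^*_k\Vert_{\mu_X}^2=\Vert\widehat h\Vert_{\mu_X}^2$, and positive definiteness then follows because $\Vert\cdot\Vert_{\mu_X}$ is already a norm on $\mathbb B^*_{\mu_X}$. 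The one genuinely delicate step, which I expect to be the main obstacle, is the linearity claim: producing a single version of $\widehat h$ that is honestly linear on a measurable linear subspace of full measure, rather than merely additive in a pairwise almost sure sense, which is why I would lean on the Bogachev results for the null-set bookkeeping.
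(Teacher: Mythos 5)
Your proposal is correct and follows essentially the same route as the paper: the paper's justification is exactly the isometric embedding of $\mathbb B^*$ into $L^2(\mu_X)$, passage to an a.s.\ convergent subsequence as in \eqref{eq:mfun}, and an appeal to Theorems 2.10.9 and 3.2.3 of \cite{bog} for linearity on a full-measure linear subspace. Your added detail on the Gaussianity of $L^2$-limits via characteristic functions and on the pullback of the $L^2(\mu_X)$ inner product only makes explicit what the paper leaves as "well-known."
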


 The difference between 
just measurable and a measurable linear functional is that  the versions of  measurable linear functionals are not allowed to be 
modified on arbitrary  null sets but only those null sets  that will not destroy the linearity.

 Especially, the mapping 
$$
B^*_{\mu_X}\times B^* \ni (\widehat h, b_2^*)\mapsto \mathbb E  \widehat h (X)   \langle X,  {b_2^*}\rangle _{\mathbb B, \mathbb B^*}
 $$  
is  bilinear, and  by Fernique's theorem  (see e.g. \cite{bog})  bounded in the sense that 
\begin{equation}\label{eq:fern}
 |\mathbb E  \widehat h  (X)   \langle X,  {b^*}\rangle _{\mathbb B, \mathbb B^*}|\leq 
  \left (  \mathbb  E \widehat h (X)^2\right )^\frac{1}{2} 
  \left( \mathbb E  \Vert X \Vert_{\mathbb B}^2  \Vert  b^*\Vert_ {\mathbb B^*}^2 \right)^\frac{1}{2}\leq 
 C \Vert \widehat h \Vert_{\mu_X} \Vert  b^*\Vert_ {\mathbb B^*} . 
\end{equation}
\begin{remark}\label{rem:1}
By 
\eqref{eq:fern} and reflexivity of $\mathbb B$, we may 
 extend the
covariance operator  $C_X: \mathbb B^* \rightarrow \mathbb B$  as a continuous mapping 
from $\mathbb B^*_{\mu_X}$ to  $\mathbb B$, and we continue to denote the extension with 
$C_X$ i.e 
\begin{equation}\label{eq:rem1}
\langle C_ X  \widehat {h}, b^*\rangle_{\mathbb B, \mathbb B^*} = 
\mathbb E    \widehat  {h} (X)   \langle X,  {b^*}\rangle _{\mathbb B, \mathbb B^*}
\end{equation}
for all $  \widehat h \in B^*_{\mu_X}$ and $ b^*\in B^*$.
\end{remark}
   \begin{definition}
Let $X, C_X, \mathbb B $, and $\mathbb B^*_{\mu_X}$  be as in Definition \ref{def:ML} and 
extend $C_X$ as in Remark \ref{rem:1}.  The 
\textit{Cameron-Martin space of $X$}  is the set 
$$
H_{\mu_X}= C_X  (\mathbb B^*_{\mu_X})
$$
equipped with the inner product 
\begin{equation}\label{eq:ip}
(h,g)_{H_{\mu_X}}=  \int_ B  \widehat h (b)   \widehat g (b)    \mu_X(db),
\end{equation}
where  for all  $h\in H_{\mu_X}$ the notation   $\widehat h $ means  such a vector in  $\mathbb B^*_{\mu_X} $  that  $C_X \widehat h  =h$.
 The corresponding  inner product norm is denoted with 
$ \Vert  h\Vert _{H_{\mu_X}} $.
\end{definition}

\begin{remark}\label{remark:separable}
The Cameron-Martin space $H_{\mu_X}$ and the space of measurable linear 
functionals $\mathbb B^*_{\mu_X}$  are  separable Hilbert spaces for 
all Gaussian random variables $X$ that have values in separable Banach spaces
(see Theorem 3.2.7 in \cite{bog}). 
\end{remark}

\begin{remark}\label{rem:2}
The covariance operator $C_X: \mathbb B^*_{\mu_X}\rightarrow H_{\mu_X}$ is an 
isometric isomorphism. From the inner product \eqref{eq:ip}, we derive the bilinear form 
$$
\langle h,\widehat g\rangle_{H_{\mu_X},  B^*_{\mu_X}}= \langle  C_X \widehat h,\widehat g\rangle_{H_{\mu_X},  B^*_{\mu_X}}:=  \int_ B  \widehat h (b)   \widehat g (b)    \mu_X(db) ,
$$ 
and thus identify $\mathbb B^*_{\mu_X}$ as the dual space of the Cameron-Martin space.
By   \eqref{eq:mfun} and \eqref{eq:rem1}, 
\begin{equation}\label{eq:trans}
\langle h,\widehat g\rangle_{H_{\mu_X},\mathbb B^*_{\mu_X}} =  \widehat g( C_X\widehat h) 
 = \widehat g (h),
\end{equation}
for proper linear versions of  $b\mapsto \widehat g(b)$ since the Cameron-Martin space 
is contained in every linear subspace of full measure (see \cite{bog}, Theorem 2.4.7). By density, we may always choose an orthonormal basis $\{ \widehat e_k\}$  of $\mathbb B_{\mu_X}^*$ that consists of  functions in $\mathbb B^*$ and the corresponding image $C_X \widehat e_k \subset 
 \mathbb B $ is 
an orthonormal basis of the Cameron-Martin space
\end{remark}

We  recall that  the Cameron-Martin space of  $\dot W$ is $L^2(D)$. In general, 
the Cameron-Martin space  of a $\mathbb B$-valued random variable is separable and the Cameron-Martin space  does not depend on the sample space of  the $\mathbb B$-valued random variable $X$ (see Theorem  3.2.7  and  Lemma 3.2.2 in \cite{bog}). Moreover, since the zero-mean Gaussian  $X$ has values in  $\mathbb B$, then  
  the inclusion mapping of the  Cameron-Martin space into  $\mathbb B $ is Hilbert-Schmidt (see \cite{bog}, Corollary 3.5.11).

 Let us  recall the definition of measurable 
linear operator in our setting (see  Definition 3.7.1  in  \cite{bog} for 
a more general formulation).

\begin{definition}
Let $\mathbb B_1,\mathbb B_2$ be  separable Banach spaces equipped with their Borel $\sigma$-algebras  and  let $\mu$  be a Borel probability measure 
on $\mathbb B_1$. A mapping $T: \mathbb B_1\rightarrow \mathbb B_2$ is a \textit{$\mu$-measurable 
linear operator} if there exists a  linear   mapping $S:\mathbb B_1\rightarrow \mathbb B_2$ such that  $S$ is $\mu$-measurable and $S=T$ $\mu$-a.e. The linear mapping 
$S$ is called \textit{a proper linear version of $T$}.
\end{definition}

In the case of Gaussian measures, there is a close relationship between
 measurable linear operators and Gaussian random series (see 
 \cite{bogd}, Theorem 1.4.5 and Corollary 1.4.6-7). In the next theorem, we explicitly 
 state the form of the measurable linear operators (the result is a minor modification of 
 \cite{bogd}, Corollary 1.4.6). 
 
\begin{theorem}\label{tsir}
Let $X $ be a zero mean Gaussian random variable  on a separable reflexive  Banach space  $\mathbb B_1$, let 
$H_{\mu_X}$ denote the  Cameron-Martin space of $X$ and let   $\{\widehat e_k\}_{k=1}^\infty \subset \mathbb B^*$
denotes an orthonormal basis of $\mathbb B^*_{\mu_X}$.  If  $T$ is a  continuous linear mapping from $H_{\mu_X}$ into a separable Hilbert  space $  \mathbb H$, then
\begin{equation}\label{eq:tsir}
\widehat  T (b)    = \sum_{k=1}^\infty    \widehat  e_k  (b)   T  C_X \widehat e_k
\end{equation}
defines a $\mu_X$-measurable linear operator $\widehat  T : \mathbb B_1\rightarrow \mathbb B_2$ for any separable Hilbert space 
 $\mathbb B_2$ such that the inclusion mapping  $\mathbb H\hookrightarrow  \mathbb B_2$ is Hilbert-Schmidt. 
 
Moreover,  if $R:\mathbb B_1 \rightarrow \mathbb B_2$ is  a  $\mu_X$-measurable linear operator whose proper linear version $R_0$ coincides with $T$ on $H_{\mu_X}$, then 
 $R=\widehat T$ $\mu_X$-a.e. 

 \end{theorem}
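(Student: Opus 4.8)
The plan is to realise $\widehat T$ as a convergent Gaussian random series in $\mathbb B_2$ and then to verify the two claims --- existence together with measurability/linearity, and uniqueness --- separately. First I would record the structural facts about the building blocks. Writing $\xi_k:=\widehat e_k$ and $h_k:=C_X\widehat e_k$, the orthonormality of $\{\widehat e_k\}$ in $\mathbb B^*_{\mu_X}$ together with Lemma \ref{remark:gaussian} makes $(\xi_k)$ a sequence of independent standard Gaussian random variables on $(\mathbb B_1,\mu_X)$, while Remark \ref{rem:2} makes $(h_k)$ an orthonormal basis of $H_{\mu_X}$. Let $j\colon\mathbb H\hookrightarrow\mathbb B_2$ be the Hilbert--Schmidt inclusion. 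Since $T\colon H_{\mu_X}\to\mathbb H$ is bounded and $j$ is Hilbert--Schmidt, the ideal property gives that $jT\colon H_{\mu_X}\to\mathbb B_2$ is Hilbert--Schmidt, so $\sum_k\Vert jTh_k\Vert_{\mathbb B_2}^2=\Vert jT\Vert_{HS}^2<\infty$. This is exactly the summability needed for the Gaussian series $\sum_k\xi_k(b)\,jTh_k$: by orthogonality of the summands in $L^2(\mu_X;\mathbb B_2)$ the partial sums $S_n(b)=\sum_{k=1}^n\xi_k(b)\,jTh_k$ converge in $L^2(\mu_X;\mathbb B_2)$, and $\mu_X$-a.s.\ convergence follows from the It\^o--Nisio theorem. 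I emphasise that $T$ alone need not be Hilbert--Schmidt, so the Hilbert--Schmidt inclusion $j$ is precisely what rescues summability.

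Next I would upgrade the a.s.\ limit to a \emph{linear} operator. Each $\xi_k$ has a proper linear version defined on a full-measure linear subspace $L_k$; on $L:=\bigcap_k L_k$, which is still linear and of full measure, every $\xi_k$ is genuinely linear. Let $\Gamma\subset L$ be the set on which the series converges. I claim $\Gamma$ is a linear subspace of full $\mu_X$-measure: full measure is the a.s.\ convergence just established, and linearity holds because on $L$ one has $S_n(\alpha b+\beta b')=\alpha S_n(b)+\beta S_n(b')$, so convergence at $b$ and $b'$ forces convergence at $\alpha b+\beta b'$. Setting $S(b):=\lim_n S_n(b)$ for $b\in\Gamma$ and $S(b):=0$ otherwise yields a measurable map that is linear on $\Gamma$, i.e.\ a proper linear version, so $\widehat T$ is a $\mu_X$-measurable linear operator. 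To check consistency with $T$ on the Cameron--Martin space, note that $H_{\mu_X}\subset L$ because the Cameron--Martin space lies in every full-measure linear subspace (\cite{bog}, Theorem 2.4.7); then \eqref{eq:trans} gives $\xi_k(h)=\langle h,\widehat e_k\rangle=(h,h_k)_{H_{\mu_X}}$, whence $S(h)=\sum_k(h,h_k)_{H_{\mu_X}}\,jTh_k=jTh$ by continuity of $jT$ and Parseval's identity.

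For uniqueness, let $R$ have proper linear version $R_0$ with $R_0|_{H_{\mu_X}}=T$, and set $D:=R_0-S$, a $\mu_X$-measurable linear operator that vanishes on $H_{\mu_X}$ (the previous step gave $S|_{H_{\mu_X}}=T$ as well). For each fixed $b_2^*\in\mathbb B_2^*$ the functional $\ell_{b_2^*}:=\langle D(\cdot),b_2^*\rangle_{\mathbb B_2,\mathbb B_2^*}$ is a measurable linear functional on $\mathbb B_1$, hence --- being Gaussian and therefore square-integrable --- an element of $\mathbb B^{*}_{1,\mu_X}$, and it annihilates $H_{\mu_X}$. Since $\mathbb B^{*}_{1,\mu_X}$ is the dual of $H_{\mu_X}$ with pairing $\ell(h)=\langle h,\ell\rangle$ via \eqref{eq:trans}, annihilating $H_{\mu_X}$ forces $\Vert\ell_{b_2^*}\Vert_{\mu_X}=0$, i.e.\ $\ell_{b_2^*}=0$ $\mu_X$-a.s. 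Letting $b_2^*$ range over a countable separating family (available by separability of $\mathbb B_2$) and intersecting the countably many null sets gives $D=0$ $\mu_X$-a.e., i.e.\ $R=\widehat T$ $\mu_X$-a.e. The step I expect to be most delicate is the coordination in the existence part: arranging that the a.s.\ convergence set be a genuine linear subspace containing $H_{\mu_X}$, so that $\widehat T$ is not merely measurable but admits a proper linear version agreeing with $T$ on the Cameron--Martin space. The uniqueness argument then rests cleanly on the fact, recorded in Lemma \ref{remark:gaussian} and \eqref{eq:trans}, that a measurable linear functional is determined $\mu_X$-a.e.\ by its restriction to $H_{\mu_X}$.
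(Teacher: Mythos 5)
Your proposal is correct and follows essentially the same route as the paper: Hilbert--Schmidt composition $\Rightarrow$ summability of $\sum_k\Vert TC_X\widehat e_k\Vert_{\mathbb B_2}^2$ $\Rightarrow$ a.s.\ convergence of the Gaussian series, linearity on a full-measure subspace, and uniqueness via a countable separating family in $\mathbb B_2^*$ together with the fact that measurable linear functionals agreeing on $H_{\mu_X}$ agree $\mu_X$-a.e. The only cosmetic deviation is that you set the limit to $0$ off the convergence set $\Gamma$ (which is not itself linear on all of $\mathbb B_1$), whereas the paper extends from $\Gamma$ by an algebraic complement to obtain the genuinely linear proper version; your map agrees a.e.\ with that extension, so the conclusion is unaffected.
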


\begin{proof}

The proof follows  the well-known lines  (e.g. \cite{bog}, Theorem 3.7.6).  For completeness, we provide a sketch of the proof. 

First,  we verify that any two proper linear measurable mappings   $  S_1, S_2: 
  \mathbb B_1\rightarrow \mathbb B_1$  that  
   coincide on the Cameron-Martin space of $X$  coincide $\mu_X$-a.s. on $\mathbb B_1$. 
   
The space  $\mathbb B_2$ is  separable and reflexive. Therefore, there exists a 
   countable subset $G$ of  $\mathbb B_2^*$ that separates the points
   of $\mathbb B_2$.  We only need to verify 
   that
   $$
   \langle  S_1 x, b^*\rangle=  \langle  S_2 x, b^*\rangle
   $$
 $\mu_X$-almost surely  for every $b^*\in G$. 
 
 The both mappings $x\mapsto  \langle  S_i x, b^*\rangle$, $i=1,2$, 
   are linear and measurable functionals, and they coincide on the Cameron-Martin 
   space. Therefore, they coincide $\mu_X$-almost surely (see Theorem 2.10.7 in \cite{bog}).
   Hence the two proper linear measurable mappings coincide $\mu_X$-almost surely.

Next, we verify that the series is $\mu_X$-almost surely convergent. 
 Since the inclusion mapping of $\mathbb H$ into $\mathbb B_2$ is Hilbert-Schmidt, also 
 the mapping $T: H_{\mu_X}\rightarrow \mathbb B_2$ is Hilbert-Schmidt.  By definition, 
 $$
 \sum_{k=1}^\infty  \Vert T  (C _X \widehat e_k) \Vert_{\mathbb B_2}^2 < \infty 
 $$
 for any orthonormal basis $\{C_X \widehat e_k\}$ of $H_{\mu_X}$.
  
    Then  the random series \eqref{eq:tsir} is  $\mu_X$-a.s. convergent, since the sum of  variances $\sum_{k=1}^\infty  \Vert T C_X e_k\Vert ^2_{\mathbb B_2}$ converges   (e.g \cite{kahane}, Theorem  2 in Chapter 3). Indeed, the  coefficient $b\mapsto \widehat e_k(b)$ of $TC_Xe_k$ in the series   \eqref{eq:tsir} are  normal random 
  variables on the Lebesgue's completion  of the  probability space $(\mathbb B_1, \mathcal B (\mathbb B_1), \mu_X)$  by Lemma \ref{remark:gaussian}. Moreover, they are independent  since $\{\widehat e_k\}$ is orthonormal.

 The set $L\subset \mathbb B_1$, where the series \eqref{eq:tsir} converges is  $\mu_X$-measurable  linear subspace of  full measure. Moreover, $\widehat T : L \rightarrow \mathbb B_2$ is linear  since  $T$ is linear.  We extend $\widehat T$ linearly onto $\mathbb B_1$ by taking such a linear subspace $M$ of $\mathbb B_1$ that 
 $\mathbb B_1$ is a direct algebraic sum of $L$ and $M$ and defining 
 $\widehat T(b +b'):= \widehat Tb $ for   $b\in L$ and $b'\in M$.  Since the convergence holds a.s.,   the mapping  $\widehat T:\mathbb B_1\rightarrow \mathbb B_2$ is  measurable with respect to the Lebesgue's completion of the Borel $\sigma$-algebra of $\mathbb B_1$.  
  %Kubrusly, Carlos S.
%The elements of operator theory. 
%Second edition. Birkhäuser/Springer, New York, 2011. xvi+539 pp. ISBN: 978-0-8176-4997-5 
  %
  
\qed\end{proof}

\begin{definition}
Let $T$ and $\widehat T$ be as in Theorem \ref{tsir}.
The mapping $\widehat T$  is called  \textit{measurable linear extension of 
$T$}. 
\end{definition}

\begin{corollary}\label{cor:multi}
Let the assumptions in Theorem \ref{tsir} hold. 
The following claims hold  for measurable linear extension $\widehat T$ of $T:H_{\mu_X}\rightarrow \mathbb H\hookrightarrow \mathbb B_2$.
 \begin{enumerate}
 \item[(i)] The set  $T(  H_{\mu_X} )$ coincides with the  Cameron-Martin space 
of $\widehat  T X$  and   the mapping   $T: H_{\mu_X} \rightarrow H_{\mu_{\widehat T X}}$ has  unit norm. 

 \item[(ii)]  (Measurable transpose)  Let  $T^*:\mathbb B_2  ^*\rightarrow  ( \mathbb  B_1^*) _{\mu_X} $ denote the transpose of $T$ and let $b^*\in \mathbb B_2^*$. Then 
 $$\langle \widehat T b , b^*\rangle_{\mathbb B_2, \mathbb B_2^*} = 
   T^* b^* ( b)$$ for $ \mu_X$-a.e. $b\in \mathbb B_1$.
 
 \item[(iii)]  (Associativity of compositions) Let $\mathbb H_2$ be a separable Hilbert space, whose inclusion into separable 
 Hilbert space $\mathbb B_3$ is Hilbert-Schmidt.  When $\widehat S: \mathbb B_2\rightarrow \mathbb B_3$  is  measurable linear extension of  the  continuous linear mapping  $S: H_{\mu_{\widehat T X}} \rightarrow \mathbb H_2$, 
 then   $$\widehat {ST} X = \widehat S ( \widehat T X )$$ almost surely. 
 \item [(iv)]  When $T$ is the identity mapping, we have 
 $$
  b = \widehat T b 
 $$
 $\mu_X$-a.e.
\end{enumerate}

\end{corollary}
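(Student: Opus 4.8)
My plan is to reduce every claim to the explicit series \eqref{eq:tsir} together with the uniqueness clause of Theorem \ref{tsir}, using the identifications in Remarks \ref{rem:1}--\ref{rem:2}. Write $e_k:=C_X\widehat e_k$, so that $\{e_k\}$ is an orthonormal basis of $H_{\mu_X}$ and, by Lemma \ref{remark:gaussian}, the coefficients $\xi_k:=\widehat e_k(X)$ are independent standard Gaussian variables. Thus $\widehat T X=\sum_k \xi_k\, Te_k$ is a Gaussian series with i.i.d. coefficients, and I read the four assertions off this representation. A fact I use repeatedly is that, since the Cameron--Martin space lies inside every linear subspace of full measure (\cite{bog}, Theorem 2.4.7), evaluating \eqref{eq:tsir} at a point $h\in H_{\mu_X}$ gives $\widehat e_k(h)=(h,e_k)_{H_{\mu_X}}$ by \eqref{eq:trans}, and hence by continuity of $T$ the proper linear version of $\widehat T$ satisfies $\widehat T h=Th$ for every $h\in H_{\mu_X}$.

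For (i) I would compute the covariance of $\widehat T X$ from the series: for $b^*\in\mathbb B_2^*$,
\[
C_{\widehat T X}b^*=\sum_k\langle Te_k,b^*\rangle_{\mathbb B_2,\mathbb B_2^*}\,Te_k=T\Big(\sum_k (e_k,T'b^*)_{H_{\mu_X}}\,e_k\Big)=T(T'b^*),
\]
where $T':\mathbb B_2^*\to H_{\mu_X}$ is defined by $(h,T'b^*)_{H_{\mu_X}}=\langle Th,b^*\rangle_{\mathbb B_2,\mathbb B_2^*}$; the summability $\sum_k|\langle Te_k,b^*\rangle|^2=\|T'b^*\|_{H_{\mu_X}}^2<\infty$ is just boundedness of $T$. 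The map $(a_k)\mapsto\sum_k a_k Te_k$ factors as $T\circ\iota$, where $\iota:\ell^2\to H_{\mu_X}$, $(a_k)\mapsto\sum_k a_ke_k$, is an isometric isomorphism; by the standard correspondence between a Gaussian series and its Cameron--Martin space (\cite{bog}, Theorem 3.7.6, or directly from \eqref{eq:ip}) the range of $T\circ\iota$ is the Cameron--Martin space of $\widehat T X$, so $H_{\mu_{\widehat T X}}=T(H_{\mu_X})$ with the quotient norm $\|h\|_{H_{\mu_{\widehat T X}}}=\inf\{\|g\|_{H_{\mu_X}}:Tg=h\}$. This quotient-norm description is exactly the statement that $T:H_{\mu_X}\to H_{\mu_{\widehat T X}}$ is a metric surjection, hence of unit norm.

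Claims (ii) and (iv) are then short. For (ii) I expand $\langle\widehat T b,b^*\rangle_{\mathbb B_2,\mathbb B_2^*}=\sum_k\widehat e_k(b)\,\langle Te_k,b^*\rangle_{\mathbb B_2,\mathbb B_2^*}$ and identify $\langle Te_k,b^*\rangle=(T^*b^*,\widehat e_k)_{\mu_X}$ as the $k$-th Fourier coefficient of $T^*b^*\in\mathbb B^*_{\mu_X}$, using \eqref{eq:trans} and Remark \ref{rem:2}; the series is then the orthonormal-basis expansion of the measurable linear functional $T^*b^*$, which by Lemma \ref{remark:gaussian} converges $\mu_X$-a.s. to $(T^*b^*)(b)$. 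For (iv), the identity map $\mathrm{id}:\mathbb B_1\to\mathbb B_1$ is itself a $\mu_X$-measurable linear operator whose proper linear version agrees with the (Hilbert--Schmidt) inclusion $T:H_{\mu_X}\hookrightarrow\mathbb B_1$ on $H_{\mu_X}$, so the uniqueness clause of Theorem \ref{tsir} forces $\widehat T=\mathrm{id}$ $\mu_X$-a.e.

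The delicate claim is the associativity (iii), where I would again use uniqueness: it suffices to show that $R:=\widehat S\circ\widehat T:\mathbb B_1\to\mathbb B_3$ is a $\mu_X$-measurable linear operator whose proper linear version coincides with $ST$ on $H_{\mu_X}$, whereupon Theorem \ref{tsir} yields $R=\widehat{ST}$ $\mu_X$-a.e., i.e. $\widehat S(\widehat T X)=\widehat{ST}X$ a.s. Two points need care. First, $R$ is only defined $\mu_X$-a.e.: since $\widehat T$ pushes $\mu_X$ forward to $\mu_{\widehat T X}$ and the version of $\widehat S$ lives on a full $\mu_{\widehat T X}$-measure linear subspace $L_S$, its preimage $\widehat T^{-1}(L_S)$ has full $\mu_X$-measure, and on the intersection of this preimage with the linear domain of $\widehat T$ the composite $R$ is linear and measurable. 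Second, and this is the main obstacle, one must match the versions on $H_{\mu_X}$: for $h\in H_{\mu_X}$ one has $\widehat T h=Th$ by the remark in the first paragraph, and $Th\in H_{\mu_{\widehat T X}}$ by (i), which is precisely the set on which the version of $\widehat S$ equals $S$; hence $R(h)=S(Th)=(ST)(h)$. I expect this bookkeeping of almost-everywhere domains and version-matching on the Cameron--Martin space, rather than any single estimate, to be the technical heart of the corollary.
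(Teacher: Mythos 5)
Your proposal is correct and follows essentially the same route as the paper: everything is reduced to the series \eqref{eq:tsir}, to the fact that proper linear versions agreeing on the Cameron--Martin space agree $\mu_X$-a.e., and, for (iii), to checking that $\widehat S\circ\widehat T$ is a measurable linear operator whose proper version matches $ST$ on $H_{\mu_X}$ (the paper then re-runs the uniqueness argument by testing against a countable separating family of functionals via (ii), while you invoke the uniqueness clause of Theorem \ref{tsir} directly -- the same mechanism). Where the paper defers to \cite{bog} (Theorems 3.7.3 and 3.5.1) for (i) and (iv), you supply the short direct computations, which is consistent with the cited results.
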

\begin{proof}
(i) The characterization of the elements of the  Cameron-Martin space 
 follows as in Theorem  3.7.3  in \cite{bog}, which also shows that the
 mapping $T$ has unit norm. 
 
 (ii)  Note that the mappings $ b\mapsto \langle \widehat  Tb, b^* \rangle_{\mathbb B_2, \mathbb B_2^* }$ 
 and $ b\mapsto     T^* b^* (b) $ are measurable and have proper linear versions that 
 coincide on $H_{\mu_X}$ for  $b^*\in \mathbb B^*$ by \eqref{eq:trans}.  By Theorem 2.10.7 in \cite{bog}, these  measurable  functionals coincide $\mu_X$-a.s. 
 
 (iii) Both mappings are measurable linear operators. Indeed, $\widehat S\widehat T X$ is well-defined $P$-measurable mapping, since $\widehat S$ is $\mu_{\widehat T X}$-measurable and the set  $\{\widehat TX \in B\}$ has zero measure whenever $\mu_{\widehat TX}(B)=0$.  The linearity on full measure linear subspace follows then from the definition of 
    extension. Considering  approximating sequences of measurable linear functionals and   (ii), we obtain
  $$
 \langle  \widehat S \widehat T b, \widehat h\rangle_{\mathbb B_3, \mathbb B_3^*}  =  S^* \widehat h ( \widehat T b)= T^* 
  S^*  \widehat h(b) = \langle \widehat {ST} b,\widehat h\rangle_{\mathbb B_3, \mathbb B_3^* } 
 $$
$\mu_X$-a.e  for each $\widehat h\in B_3^*$. Taking $\widehat h$ from some 
 countable dense subset of $\mathbb B_3^*$ proves the claim. 

 (iv) See Theorem 3.5.1 in \cite{bog}.
 
\qed\end{proof} 

\section{Measurable boundary operators} \label{sec:mbo}

Theorem \ref{tsir} allows us to define the  measurable linear extensions of the boundary operators
$$
B u= u |_{\partial D}
$$
and 
$$
Bu= \partial _n u|_{\partial D}+\lambda u|_{\partial D}
$$
for $u\in H^1(D)$. Here we omit  writing out the inclusion mappings. 
 
 For simplicity,  the sample space of the boundary mapping is taken to be a scale space (for Banach scale spaces, see  \cite{scales}).  In particular, let us denote with  $
H_{sc}(\partial D), 
$  
 the closure  of $H^{-\frac{1}{2}}(\partial D)$ with respect to the norm
$$
\Vert u\Vert_{sc}:= \left( \sum_{k=1}^\infty  k^{-2} (u,f_k)_{H^{-\frac{1}{2}}(\partial D) } ^2\right)^\frac{1}{2},
$$ 
where $(f_k)$ is a fixed orthonormal basis of $H^{-\frac{1}{2}}(\partial D)$. 
 
   The choices of  sample spaces have  little effect  for the stochastic analysis.  
\begin{corollary}\label{cor:CM}
Let $X$ be an  $H^{-r}(D)$-valued Gaussian zero mean random variable for some 
$r\geq 0$. 
 Let $B:H_{\mu_X }\rightarrow H^{-\frac{1}{2}}(\partial D)$ be a continuous linear mapping and let  $(e_k)$ be an orthonormal basis of $H_{\mu_X}$.  Then
 $$
 \widehat B b  = \sum_{k=1}^\infty  \widehat e_k(b)   B e_k
 $$
belongs to $H_{sc}(\partial D)$  for $\mu_X$-a.e. $b$, the mapping   $\widehat B: 
H^{-r}(D)\rightarrow H_{sc}(\partial D)$  is a $\mu_X$-measurable linear operator, and 
$\widehat B X$ is $H_{sc}(\partial D)$-valued Gaussian random variable that has 
zero mean and covariance operator $C_{\widehat B X}$ satisfying 
$ C_{\widehat B X}  u=  B C_X B^* u  $ for all $u\in H^\frac{1}{2}(\partial D)$. 
\end{corollary}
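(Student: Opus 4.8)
The plan is to read the statement as a direct instance of Theorem \ref{tsir}, applied with $\mathbb B_1 = H^{-r}(D)$, $\mathbb H = H^{-\frac{1}{2}}(\partial D)$, $\mathbb B_2 = H_{sc}(\partial D)$ and $T = B$. The only hypothesis of that theorem not already in hand is that the inclusion $\iota\colon H^{-\frac{1}{2}}(\partial D)\hookrightarrow H_{sc}(\partial D)$ is Hilbert--Schmidt. This I would verify directly from the definition of $\Vert\cdot\Vert_{sc}$: since $(f_k)$ is orthonormal in $H^{-\frac{1}{2}}(\partial D)$, one has $\Vert \iota f_k\Vert_{sc}^2 = k^{-2}$, so $\sum_{k} \Vert \iota f_k\Vert_{sc}^2 = \sum_k k^{-2} < \infty$. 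With this in place, Theorem \ref{tsir} applies verbatim and delivers at once the $\mu_X$-a.e.\ convergence of the series $\sum_k \widehat e_k(b) B e_k$, the fact that its sum lies in $H_{sc}(\partial D)$, and that $\widehat B\colon H^{-r}(D)\to H_{sc}(\partial D)$ is a $\mu_X$-measurable linear operator.

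It then remains to identify the law of $\widehat B X$. Gaussianity and the vanishing mean are inherited along the series: each partial sum $\sum_{k=1}^N \widehat e_k(X)\, B e_k$ is a finite linear combination of the jointly Gaussian, zero-mean variables $\widehat e_k(X)$ of Lemma \ref{remark:gaussian}, hence is a zero-mean Gaussian $H_{sc}(\partial D)$-valued vector; the $\mu_X$-a.s.\ limit of such vectors is again Gaussian with zero mean. Equivalently, one may cite that the image of a Gaussian measure under a measurable linear operator is Gaussian.

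For the covariance I would pair $\widehat B X$ against functionals $u,v$ and compute $\langle C_{\widehat B X} u, v\rangle = \mathbb E\big[\langle \widehat B X, u\rangle\langle \widehat B X, v\rangle\big]$. The measurable transpose identity, Corollary \ref{cor:multi}(ii), rewrites $\langle \widehat B X, u\rangle = (B^* u)(X)$ $\mu_X$-a.s., where $B^*\colon H^{\frac{1}{2}}(\partial D)\to \mathbb B^*_{\mu_X}$ is the transpose of $B\colon H_{\mu_X}\to H^{-\frac{1}{2}}(\partial D)$; Lemma \ref{remark:gaussian} turns the expectation into the inner product $(B^* u, B^* v)_{\mu_X}$, and Remark \ref{rem:2} together with the transposition rule \eqref{eq:trans} converts this into $\langle B C_X B^* u,\, v\rangle_{H^{-\frac{1}{2}}(\partial D), H^{\frac{1}{2}}(\partial D)}$. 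As $u,v$ are arbitrary, this gives $C_{\widehat B X} u = B C_X B^* u$ for $u\in H^{\frac{1}{2}}(\partial D)$.

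The main obstacle I anticipate is bookkeeping rather than analysis: one must track three separate dualities --- the Sobolev pairing $(H^{-\frac{1}{2}}(\partial D))^* = H^{\frac{1}{2}}(\partial D)$, the realization of $u,v$ as functionals on the scale space $H_{sc}(\partial D)$ (here $C_{\widehat B X}$ is understood via its extension to measurable linear functionals in the manner of Remark \ref{rem:1}, since the genuine dual $H_{sc}(\partial D)^*$ is strictly smaller than $H^{\frac{1}{2}}(\partial D)$), and the identification $H_{\mu_X}^* = \mathbb B^*_{\mu_X}$ of Remark \ref{rem:2} --- and confirm that $B^*$ is consistent across all three. The genuinely analytic content, by contrast, is nothing more than the convergent estimate $\sum_k k^{-2} < \infty$.
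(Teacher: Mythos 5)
Your proposal is correct and follows essentially the same route as the paper: the paper's entire proof is the reduction to Theorem \ref{tsir} together with the verification that $\sum_k \Vert f_k\Vert_{sc}^2 = \sum_k k^{-2} < \infty$, i.e.\ that the inclusion $H^{-\frac{1}{2}}(\partial D)\hookrightarrow H_{sc}(\partial D)$ is Hilbert--Schmidt, exactly as you compute. Your additional derivation of the covariance identity $C_{\widehat B X} = B C_X B^*$ via Corollary \ref{cor:multi}(ii) is a correct elaboration of a step the paper leaves implicit.
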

\begin{proof}
The claim follows from Theorem \ref{tsir} after we verify that the inclusion of 
$H^{-\frac{1}{2}}(\partial D)$ into $H_{sc}(\partial D)$ is Hilbert-Schmidt.
By definition,  this follows from 
  $$
\sum_{k=1}^\infty   \Vert f_k\Vert ^2_{sc} =   \sum_{k=1}^\infty  k^{-2} <\infty.   
$$  
\qed\end{proof} 

Corollary \ref{cor:CM} allows us to define the measurable linear  extensions of 
boundary operators.
\begin{definition}\label{def:new}
  Let $X$ be $H^{-r}(D)$-valued  Gaussian zero mean random variable for some $r\geq 0$ whose 
Cameron-Martin space $H_{\mu_X}$ can be continuously included in $H^1(D)$, and let
 $(e_k)$  be an orthonormal basis of $H_{\mu_X}$.
  
The \textit{measurable trace  of  $X$} onto $\partial D$ is 
the $H_{sc}(\partial D)$-valued Gaussian zero mean  random variable 
$$
\widehat {Tr} X = \sum_{k=1}^\infty \widehat e_k(X) e_k|_{\partial D}. 
$$ 
A proper linear version of the corresponding mapping $\widehat {Tr}$ is 
called the \textit{$\mu_X$-measurable trace}.

 Assuming additionally that $\Delta u \in L^2(D)$ for all $u\in H_{\mu_X}$,  \textit{the measurable normal derivative  of $X$} at $\partial D$ is
the $H_{sc}(\partial D)$-valued Gaussian zero mean  random variable  
$$
\widehat \partial_n X = \sum_{k=1}^\infty \widehat e_k(X) \partial_n e_k ,
$$ 
where $\partial _n e_k$ denotes 
the usual conormal derivative of $e_k$. A proper linear version of the corresponding mapping $ \widehat \partial_n $ is 
called the \textit{$\mu_X$-measurable normal derivative}.
\end{definition}

Let us  now verify that the $\mu_X$-measurable trace and  $\mu_X$-measurable normal derivative are 
extensions of the usual operations. 

\begin{lemma}\label{lem:1} 
Let  $\widehat B$ be  a $\mu_X$-measurable trace or a $\mu_X$-measurable normal derivative.
\begin{enumerate}
\item[a)]  If $X$ has a.s. values in $H^1(D)$, then $\widehat B X = BX$ almost surely.
\item[b)]  If $u$ belongs to the Cameron-Martin space of $X$ and   $ \widehat B u =0$ in $H_{sc}$, then  $   B u =0   \text{ in }  H^{-\frac{1}{2}}(\partial D)$.
 \end{enumerate}  
\end{lemma}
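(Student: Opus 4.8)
My plan is to derive both parts from the uniqueness clause of Theorem~\ref{tsir} together with the transposition identity~\eqref{eq:trans}, by viewing the classical trace / conormal derivative \emph{both} as the continuous map $T=B|_{H_{\mu_X}}$ on the Cameron--Martin space and as a competing $\mu_X$-measurable linear operator on all of $H^{-r}(D)$.

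For part a) let $B$ denote the classical operation and let $V\subseteq H^{-r}(D)$ be the continuously embedded Borel subspace on which $B$ is defined and continuous into $H^{-\frac12}(\partial D)$: this is $V=H^1(D)$ for the trace, and the graph space $V=\{u\in H^1(D):\Delta u\in L^2(D)\}$ for the conormal derivative (where $BX$ is classically meaningful in the first place). Composing with the Hilbert--Schmidt inclusion $H^{-\frac12}(\partial D)\hookrightarrow H_{sc}(\partial D)$ from Corollary~\ref{cor:CM} and extending linearly across an algebraic complement of $V$, exactly as in the proof of Theorem~\ref{tsir}, I obtain a mapping $R:H^{-r}(D)\to H_{sc}(\partial D)$ that is Borel measurable on $V$ and linear. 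The hypothesis that $X$ takes its values in $V$ almost surely says precisely that $V$ has full $\mu_X$-measure, so $R$ is a $\mu_X$-measurable linear operator and $R|_V=B$ is its proper linear version on $V$. Since $H_{\mu_X}\subseteq V$ and $T=B|_{H_{\mu_X}}$, this proper linear version coincides with $T$ on the Cameron--Martin space, so the uniqueness part of Theorem~\ref{tsir} forces $R=\widehat B$ $\mu_X$-a.e. Pushing this $\mu_X$-a.e. identity forward through $\mu_X=P\circ X^{-1}$ yields $\widehat B X=RX=BX$ $P$-almost surely.

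For part b) fix $u\in H_{\mu_X}$ and evaluate a proper linear version of $\widehat B$ at the single point $u$. Because the Cameron--Martin space is contained in every linear subspace of full measure (\cite{bog}, Theorem~2.4.7, as invoked for~\eqref{eq:trans}), the value $\widehat e_k(u)$ is unambiguous and, by~\eqref{eq:trans}, equals $(u,e_k)_{H_{\mu_X}}$, so that
$$\widehat B u=\sum_{k=1}^\infty \widehat e_k(u)\,Be_k=\sum_{k=1}^\infty (u,e_k)_{H_{\mu_X}}\,Be_k.$$
Expanding $u=\sum_k (u,e_k)_{H_{\mu_X}}e_k$ in $H_{\mu_X}$ and applying the \emph{continuous} map $T=B$ termwise shows that this same series converges to $Bu$ in $H^{-\frac12}(\partial D)$; since the inclusion $H^{-\frac12}(\partial D)\hookrightarrow H_{sc}(\partial D)$ is continuous, it also converges to the image of $Bu$ in $H_{sc}(\partial D)$, whence $\widehat B u=Bu$ in $H_{sc}(\partial D)$. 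Finally, the defining seminorm $\Vert\cdot\Vert_{sc}$ vanishes only at $0$ because $(f_k)$ is an orthonormal basis of $H^{-\frac12}(\partial D)$, so the inclusion is injective; therefore $\widehat B u=0$ in $H_{sc}(\partial D)$ gives $Bu=0$ in $H_{sc}(\partial D)$, and hence $Bu=0$ in $H^{-\frac12}(\partial D)$.

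The step I expect to be the main obstacle is the measurability bookkeeping in part a): one must verify that the classical operation, defined only on the proper subspace $V$, genuinely extends to a \emph{$\mu_X$-measurable linear} operator on all of $H^{-r}(D)$ --- i.e. that $V$ is Borel of full measure, that $B|_V$ is Borel into $H_{sc}(\partial D)$, and that the linear extension off $V$ preserves measurability --- so that the uniqueness statement of Theorem~\ref{tsir} applies. For the conormal derivative this also entails identifying the correct domain $V$ (the graph space, not all of $H^1(D)$) on which the operation is continuous into $H^{-\frac12}(\partial D)$. By contrast, part b) is routine once~\eqref{eq:trans} and the injectivity of the scale-space inclusion are recorded.
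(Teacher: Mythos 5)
Your part b) matches the paper's argument (proper linear versions agree with $B$ on the Cameron--Martin space, then injectivity of the embedding $H^{-\frac12}(\partial D)\hookrightarrow H_{sc}(\partial D)$), and your justification via the norm property of $\Vert\cdot\Vert_{sc}$ is in fact cleaner than the paper's appeal to density. Part a), however, is a genuinely different route. The paper argues directly: since $X$ lies a.s.\ in $H^1(D)$, the expansion $X=\sum_k\widehat e_k(X)e_k$ converges a.s.\ in the $H^1(D)$ norm (the It\^o--Nisio-type fact behind Corollary~\ref{cor:multi}(iv)), and one simply applies the continuous boundary operator termwise to identify $\widehat B X$ with $BX$. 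You instead package the classical operator as a competing $\mu_X$-measurable linear operator $R$ on a full-measure subspace $V$ and invoke the uniqueness clause of Theorem~\ref{tsir}. Both work: the paper's version is shorter but needs the convergence of the Gaussian series in the smaller space; yours trades that for the measurability bookkeeping you flag (Borel-ness of $V$ and of $B|_V$ in the subspace Borel structure --- which follows from Kuratowski's theorem since the inclusion $V\hookrightarrow H^{-r}(D)$ is continuous and injective --- plus measurability of the extension off $V$). One caveat applies to both proofs equally: for the conormal derivative, a.s.\ membership of $X$ in $H^1(D)$ alone does not place $X$ in the graph space where $\partial_n X$ is classically defined, so the hypothesis of a) is really only adequate as stated for the trace; you at least make this explicit, whereas the paper dismisses it with ``similar result holds.''
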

\begin{proof}
 a)    The random  series
$$
X = \sum_{k=1}^\infty \widehat e_k(X) e_k
$$
converges in  $H^1(D)$, and we have 
$$
\widehat {Tr} X = \sum_{k=1}^\infty \widehat e_k(X) e_k|_{\partial D} = X|_{\partial D} 
$$
by continuity of the trace operator. Similar result holds for the normal derivative. 

b)  Since we are dealing with proper linear versions, we have that 
$ \widehat B =  B$ on the Cameron-Martin space, and hence 
  $Bu=0$ in $H_{sc}$. Moreover,
  $H^{-\frac{1}{2}}(\partial D)$ is 
dense in $H_{sc}$.  
 \qed\end{proof} 
 
\begin{remark}
Take $X$   to be $H^{-r}(D)$-valued  Gaussian zero mean random variable.
Since the dual  $H^{r}(D)$  of the sample space $H^{-r}(D)$  of $X$ is dense in 
$(H^{r}(D))_{\mu_X}$  and $C_X:(H^{r}(D))_{\mu_X} \rightarrow H_{\mu_X} $ is 
an isometry by Remark \ref{rem:2},  the orthonormal basis $(e_k)$ of $H_{\mu_X}$ can be always chosen  
 so that  $\widehat e_k $ is from the dual $H^r(D)$ of the sample space $H^{-r}(D)$ of $X$.
Then 
$$
\widehat e_k (X)=\langle X, \widehat e_k \rangle _{H^{-r}(D), H^r(D)},
$$ 
by \eqref{eq:mfun} which is  notationally simpler choice of a proper linear basis.
\end{remark}

\section{Existence and uniqueness of  the solution} \label{sec:4}

A rough description  of  the  Cameron-Martin space of  a Gaussian random 
variable $X$  leads to a  crude  idea of the  regularity of  $X$.  
\begin{lemma}\label{lem:2}
Let $X$ be a zero mean Gaussian  $H^{-r}(D)$-valued random variable,  whose Cameron-Martin space  $H(\mu_X)$ can be  continuously imbedded into 
$H^1(D)$.  Then  $X$ has realizations in $H^{1-d/2-\delta}(D)$ a.s.  for each 
$\delta>0$.
\end{lemma}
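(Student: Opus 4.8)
The plan is to expand $X$ in a basis of its Cameron--Martin space and apply the Gaussian series convergence criterion of \cite{kahane} in the target space $H^{1-d/2-\delta}(D)$. Let $(e_k)$ be an orthonormal basis of $H_{\mu_X}$ with $e_k = C_X\widehat e_k$ and $\widehat e_k$ an orthonormal basis of $\mathbb B^*_{\mu_X}$, as in Remark \ref{rem:2}. By Lemma \ref{remark:gaussian} the coefficients $\widehat e_k(X)$ are zero mean Gaussian with $\mathbb E\,\widehat e_k(X)\widehat e_j(X)=\delta_{kj}$, hence independent standard normal variables. Taking $T$ to be the (Hilbert--Schmidt) inclusion $H_{\mu_X}\hookrightarrow H^{-r}(D)$ of the Cameron--Martin space into the sample space, Corollary \ref{cor:multi}(iv) gives the representation
$$
X=\sum_{k=1}^\infty \widehat e_k(X)\,e_k \qquad \mu_X\text{-a.s.},
$$
the series converging in $H^{-r}(D)$.

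First I would show that the inclusion $\iota\colon H_{\mu_X}\hookrightarrow H^{1-d/2-\delta}(D)$ is Hilbert--Schmidt. This follows by factoring it as
$$
H_{\mu_X}\hookrightarrow H^1(D)\hookrightarrow H^{1-d/2-\delta}(D),
$$
where the first map is the continuous embedding assumed in the hypothesis and the second is Hilbert--Schmidt by Maurin's theorem (\cite{MR0162126}), since the gap of the Sobolev orders is $1-(1-d/2-\delta)=d/2+\delta>d/2$. As a composition of a bounded operator with a Hilbert--Schmidt operator, $\iota$ is Hilbert--Schmidt, which by definition means precisely that
$$
\sum_{k=1}^\infty \|e_k\|^2_{H^{1-d/2-\delta}(D)}=\|\iota\|^2_{\mathrm{HS}}<\infty.
$$

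Since $\|e_k\|^2_{H^{1-d/2-\delta}(D)}$ is exactly the variance of the $k$th term $\widehat e_k(X)e_k$ viewed in $H^{1-d/2-\delta}(D)$, summability of the variances together with independence of the coefficients lets me invoke the Gaussian series convergence criterion (\cite{kahane}, Theorem 2 in Chapter 3): the series $\sum_k \widehat e_k(X)e_k$ converges $\mu_X$-a.s.\ in $H^{1-d/2-\delta}(D)$ to some random element $Y$. It remains to identify $Y$ with $X$. The common partial sums $\sum_{k=1}^N\widehat e_k(X)e_k$ converge $\mu_X$-a.s.\ to $X$ in $H^{-r}(D)$ and to $Y$ in $H^{1-d/2-\delta}(D)$; since both Sobolev spaces embed continuously into the space of distributions $\mathcal D'(D)$, where limits are unique, one gets $X=Y$ $\mu_X$-a.s., so $X$ has realizations in $H^{1-d/2-\delta}(D)$ almost surely.

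The main obstacle is the Hilbert--Schmidt factorization: it is the only place where the dimension $d$ and the continuous embedding hypothesis enter, and it is what pins down the sharp exponent $1-d/2-\delta$ via Maurin's theorem. A secondary point requiring care is the identification of the stronger-space limit $Y$ with $X$, which is handled cleanly by passing to $\mathcal D'(D)$ rather than by comparing $H^{-r}(D)$ and $H^{1-d/2-\delta}(D)$ directly (their orders need not be comparable).
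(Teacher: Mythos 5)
Your proof is correct and follows the same route as the paper: both reduce the claim to Maurin's theorem making $H^1(D)\hookrightarrow H^{1-d/2-\delta}(D)$ Hilbert--Schmidt, so that the Cameron--Martin space embeds Hilbert--Schmidtly into the target space and the Gaussian series representation of $X$ converges there almost surely. The paper's version is only a three-line sketch (it does note that a Lipschitz domain is an extension domain so that Maurin's theorem applies, a point you leave implicit), and your expansion of the final step --- the Kahane convergence criterion together with the identification of the limit with $X$ in $\mathcal D'(D)$ --- supplies exactly the details the paper omits.
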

\begin{proof}
A  Lipschitz domain is an extension domain (see Theorem A.4 in  \cite{mclean}). 
Hence, we may apply Maurin's theorem, which  tells that the inclusion of the $H^1(D)$ into 
 into $H^{-r}(D)$ is    Hilbert-Schmidt  \cite{MR0162126}. Hence, also the Cameron-Martin space 
 can be imbedded into $H^{-r}(D)$ by a Hilbert-Schmidt mapping. Hence, $X$ belongs 
 a.s. in $H^{-r}$
\qed\end{proof}

We are now ready to formulate the measurable form of the BVP. 
\begin{theorem}\label{th:ex}
Let $D\subset \mathbb R^d$ be a bounded Lipschitz domain,   $\dot W$ be  the Gaussian  white noise on $D$, $r>d/2-1$, and  $\lambda, \beta >0$. 

There exists a  pathwise unique Gaussian zero mean  $H^{-r}(D)$-valued random field  $X$  that satisfies  the Dirichlet (or Neumann or Robin) BVP in the following sense: 
\begin{enumerate}
\item  the Cameron-Martin space  $H(\mu_X)$ of $X$  can be continuously imbedded  into $H^1(D)$  and all $h\in H(\mu_X)$ satisfy $\Delta h\in L^2(D)$,
\item the field $X$ satisfies  
\begin{eqnarray}\label{eq:B}
-\Delta X + \lambda   X= \dot W
\end{eqnarray}
in the sense of generalized functions, and
\item   the field $X$ satisfies the $\mu_X$-measurable boundary condition 
\begin{eqnarray}
 \widehat {Tr}  X =0  \text{ in }  H_{sc}(\partial D) \label{eq:C}
\end{eqnarray}
in the Dirichlet case (or  
\begin{eqnarray}
 \widehat {\partial_n}  X =0  \text{ in }  H_{sc}(\partial D)  \label{eq:C2}
\end{eqnarray}
in the Neumann case,  or  
\begin{eqnarray}
 \widehat {\partial_n}  X  + \beta \widehat{Tr} X =0  \text{ in }  H_{sc}(\partial D)  \label{eq:C3}
\end{eqnarray}
in the Robin case, correspondingly).
\end{enumerate}
\end{theorem}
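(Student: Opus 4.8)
The plan is to reduce the stochastic BVP to the deterministic $L^2$-solution operator together with the measurable linear extension machinery of Theorem \ref{tsir}. Let $\mathcal{G}$ denote the deterministic solution operator of the regular problem: for $f\in L^2(D)$, $\mathcal{G}f\in H^1(D)$ is the variational solution of $-\Delta u+\lambda u=f$ with the homogeneous Dirichlet (resp. Neumann, Robin) condition. Coercivity of the associated bilinear form, which is exactly where the hypotheses $\lambda,\beta>0$ are used, together with Lax--Milgram furnishes a continuous $\mathcal{G}\colon H^{-1}(D)\to H^1(D)$; restricting to $f\in L^2(D)\hookrightarrow H^{-1}(D)$ gives $\Delta(\mathcal{G}f)=\lambda\mathcal{G}f-f\in L^2(D)$, so that $\mathcal{G}f$ lies in $V:=\{h\in H^1(D):\Delta h\in L^2(D),\ Bh=0\}$, the conormal derivative being understood through the generalized Green's formula (see \cite{mclean}). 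Since the Cameron--Martin space of $\dot W$ is $L^2(D)$, the map $\mathcal{G}\colon L^2(D)\to H^1(D)$ is continuous, and Maurin's theorem makes the inclusion $H^1(D)\hookrightarrow H^{-r}(D)$ Hilbert--Schmidt precisely when $r>d/2-1$, Theorem \ref{tsir} applies and $X:=\widehat{\mathcal{G}}\dot W$ is a well-defined Gaussian zero-mean $H^{-r}(D)$-valued random variable.

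First I would verify that $X$ satisfies the three conditions. Condition (1) is immediate from Corollary \ref{cor:multi}(i): the Cameron--Martin space of $X$ is $\mathcal{G}(L^2(D))\subseteq V\subseteq H^1(D)$, and every $h=\mathcal{G}f$ obeys $\Delta h\in L^2(D)$. For condition (2) I would fix an orthonormal basis $(g_k)$ of $L^2(D)$ so that the $\dot W(g_k)$ are i.i.d. standard normals and test the series $X=\sum_k\dot W(g_k)\mathcal{G}g_k$ against $\phi\in C^\infty_0(D)$; integrating by parts (with no boundary contribution, as $\phi$ has compact support) yields $\langle \mathcal{G}g_k,-\Delta\phi+\lambda\phi\rangle=\langle g_k,\phi\rangle$, whence $\langle X,-\Delta\phi+\lambda\phi\rangle=\sum_k\dot W(g_k)\langle g_k,\phi\rangle=\dot W(\phi)$, i.e. $-\Delta X+\lambda X=\dot W$ in the sense of generalized functions. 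Condition (3) then follows from Definition \ref{def:new} and Corollary \ref{cor:CM}: since $B\mathcal{G}g_k=0$ for every $k$, the measurable boundary series $\widehat{B}X=\sum_k\dot W(g_k)\,B\mathcal{G}g_k$ vanishes in $H_{sc}(\partial D)$.

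For pathwise uniqueness I would show that any admissible field must coincide with $\widehat{\mathcal{G}}\dot W$. Conditions (1) and (3) force $H_{\mu_X}\subseteq V$: from $C_{\widehat{B}X}=BC_XB^*=0$ (Corollary \ref{cor:CM}) one reads off $\Vert B^*u\Vert_{\mu_X}=0$ for all $u$, so $B=0$ on $H_{\mu_X}$, which is the content of Lemma \ref{lem:1}b). Because $\lambda,\beta>0$ also give uniqueness for the deterministic problem, $\mathcal{G}A=\mathrm{id}$ on $V$ for $A:=-\Delta+\lambda\colon V\to L^2(D)$, and in particular on $H_{\mu_X}$. The decisive step is to identify condition (2) with the identity $\widehat{A}X=\dot W$ for the measurable linear extension of $A\colon H_{\mu_X}\to L^2(D)$ (continuous by the closed graph theorem): by the measurable transpose formula (Corollary \ref{cor:multi}(ii)) combined with integration by parts one gets $\langle \widehat{A}X,\phi\rangle=\langle X,-\Delta\phi+\lambda\phi\rangle=\dot W(\phi)$ for $\phi\in C^\infty_0(D)$, hence $\widehat{A}X=\dot W$ almost surely. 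The associativity of measurable compositions (Corollary \ref{cor:multi}(iii)) and Corollary \ref{cor:multi}(iv) then close the argument in one line, $\widehat{\mathcal{G}}\dot W=\widehat{\mathcal{G}}(\widehat{A}X)=\widehat{\mathcal{G}A}\,X=\widehat{\mathrm{id}}\,X=X$ a.s.

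The step I expect to demand the most care is precisely this last identification of the distributional equation with the measurable-extension equation $\widehat{A}X=\dot W$, since it requires matching the formally self-adjoint transpose of $A$ on the Cameron--Martin space with the distributional pairing and choosing the Hilbert--Schmidt target spaces in Theorem \ref{tsir} consistently, e.g. realizing $\widehat{A}$ into $H^{-s}(D)$ with $s>d/2$ via $L^2(D)\hookrightarrow H^{-s}(D)$. By contrast, the deterministic ingredients — coercivity on Lipschitz domains, the generalized Green's formula defining $\partial_n\colon V\to H^{-1/2}(\partial D)$, and uniqueness of the regular BVP — are classical and enter only as black boxes.
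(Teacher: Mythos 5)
Your proposal is correct, and its existence half coincides with the paper's: both construct $X=\widehat T\dot W$ from the deterministic $L^2$-solution operator via Theorem \ref{tsir}, obtain condition (1) from the stability estimate $\Vert Tf\Vert_{H^1(D)}\leq C\Vert f\Vert_{L^2(D)}$, verify the PDE term by term in the random series, and kill the boundary term because $BTf_k=0$ for every basis vector. The uniqueness half takes a genuinely different route. The paper forms the difference $X-\widetilde X$ of two putative solutions and transfers the almost-sure identities $(-\Delta+\lambda)(X-\widetilde X)=0$ and the vanishing of the measurable boundary data to the individual elements $u$ of the Cameron--Martin space of the difference (via Theorem 2.10.7 of \cite{bog} and Lemma \ref{lem:1}); deterministic uniqueness then forces $u\equiv 0$, so that Cameron--Martin space is trivial and $X=\widetilde X$ a.s. You instead invert the equation at the level of measurable linear extensions: conditions (1) and (3) place $H_{\mu_X}$ inside the domain $V$ on which $\mathcal G A=\mathrm{id}$ for $A=-\Delta+\lambda$, condition (2) is upgraded to $\widehat A X=\dot W$ through the measurable transpose formula of Corollary \ref{cor:multi}(ii) and density of $C^\infty_0(D)$ in the dual of the target space, and the composition rules (iii)--(iv) of Corollary \ref{cor:multi} yield $X=\widehat{\mathcal G A}\,X=\widehat{\mathcal G}\dot W$. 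Your route delivers uniqueness and the representation formula of Theorem \ref{thm:t} in a single stroke, at the price of the extra step --- which you correctly flag as the delicate one and execute correctly, since elements of $(\mathbb B^*)_{\mu_X}$ are determined by their action on $H_{\mu_X}$ --- of identifying the distributional equation with the measurable-extension equation; the paper's route avoids that identification altogether and needs only the elementary fact that a zero-mean Gaussian variable with trivial Cameron--Martin space vanishes almost surely.
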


At a  glance, the above reformulation  may  seem  eccentric. However, it involves typical elements of 
elliptic BVPs. Namely,  the regularity of the desired solution $X$ is explicitly  specified. This is done 
by requiring that   (a) the sample space of $X$ is at least in the Sobolev space $H^{-r}(D)$, (b) $X$ has a  Gaussian distribution,  and  (c) the inclusion of the  Cameron-Martin space of $X$ into $H^1(D)$ is continuous.   The space $H^{-r}(D)$ may seem unnecessary irregular, but this is not a hindrance, since  local  and global regularity  of the solution can be further studied and refined.   On the other hand, such a weak  condition is easy to verify. 

 The Gaussianity of the solution is explicitly required in order to apply  Cameron-Martin space techniques. In particular, $\widehat B:H^{-r}(D) \rightarrow (H^{-\frac{1}{2}}(\partial D))_{sc}$ is  the     measurable linear extension of  the continuous linear operator  $B \circ I : H\rightarrow  H^{-\frac{1}{2}}(D) $ (see e.g. \cite{bog}). The restriction  (i) on the Cameron-Martin space is needed for the definition of the normal derivative.  In ordinary elliptic BVPs, the   boundary trace of $H^1(D)$-functions   is defined as a continuous linear 
extension of the trace  operator defined originally on  continuous functions.  In the same spirit,   the boundary operator   is extended from $H^1(D)$ onto  aspired solutions. However, the extended boundary operator $\widehat B$ appearing in       \eqref{eq:C}, \eqref{eq:C2}, and \eqref{eq:C3} is no longer required to be  continuous but only measurable, which makes  $\widehat B X$  well-defined generalized random field on the boundary (see Definition \ref{def:new}). 

 Another significant difference is that  $\widehat B$ depends  on the solution  $X$ through its Cameron-Martin space. However,
 % However, the 
%principle behind   Tsirelson's extensions is  the well-known Karhunen-Lo\`{e}ve decomposition
%$$
%X= \sum_{k=1}^\infty \langle X, \widehat e_k\rangle_{H^{-r}(D), H^r(D)} C_X \widehat e_k  \text{ a.s.,}
%$$
% which a.s. identifies  the samples of  $X$ with the samples of  the random series. Recalling  Lusin's theorem,  this identifies any measurable and linear  mappings of $X$ with  Tsirelson's extensions on probabilistically large sets. Moreover,
 it can be shown that   the extension $\widehat B$ coincide with the ordinary continuous boundary operator $B$  for $L^2$-loads,
 which are dense in negatively indexed Sobolev spaces  and $X$ contributes 
 to assigning probabilities to sets.

\begin{theorem}\label{thm:t}
Let $X$ be as in Theorem \ref{th:ex}. Then   
$$
X=\widehat T \dot W, 
 $$
where  $T:L^2(D)\rightarrow H^1(D)$ is defined by setting 
$T f :=u$, where  
\begin{eqnarray}
\begin{split} 
  \left\{
  \begin{array}{r l l}
- \Delta u  + \lambda u   &=  f   & {\rm   \,  in\,  } D\\
  Bu  &=  0  & {\rm  \, on\,  } \partial D,
 \end{array}
 \right.
 \end{split}\label{eq:bp}
 \end{eqnarray}
and  the boundary operator $Bu=u|_{\partial D}$  in the Dirichlet case or  $Bu= 
   \partial_n u |_{\partial D}$ in the Neumann case or  $Bu=  \partial_ n  u |_{\partial D } +\beta u |_ {\partial D}$  in the Robin case.

\end{theorem}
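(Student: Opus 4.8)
The plan is to show that the Gaussian field $\widehat T\dot W$ satisfies every requirement listed in Theorem~\ref{th:ex}, and then to invoke the pathwise uniqueness asserted there to conclude $X=\widehat T\dot W$ almost surely. First I would make sure $\widehat T$ is meaningful as a map into a negatively indexed Sobolev space. The deterministic solution operator $T\colon L^2(D)\to H^1(D)$ is continuous, and $L^2(D)$ is exactly the Cameron--Martin space of $\dot W$. By Maurin's theorem (as used in Lemma~\ref{lem:2}) the inclusion $H^1(D)\hookrightarrow H^{-r}(D)$ is Hilbert--Schmidt precisely when $r>d/2-1$, which is the standing hypothesis. Hence Theorem~\ref{tsir}, applied with $\mathbb H=H^1(D)$ and target $H^{-r}(D)$, produces the measurable linear extension $\widehat T$, and $\widehat T\dot W$ is automatically an $H^{-r}(D)$-valued, zero mean, Gaussian random variable.

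Next I would verify the three structural conditions. For condition (1), Corollary~\ref{cor:multi}(i) identifies the Cameron--Martin space of $\widehat T\dot W$ with $T(L^2(D))$; every such element is $u=Tf$ solving \eqref{eq:bp}, hence lies in $H^1(D)$ and satisfies $\Delta u=\lambda u-f\in L^2(D)$. Because $T$ is bounded from $L^2(D)$ into $H^1(D)$, the pushed-forward Cameron--Martin norm dominates the $H^1(D)$-norm up to the constant $\Vert T\Vert$, which gives the continuous embedding into $H^1(D)$. Condition (3) is then immediate from Definition~\ref{def:new}: any orthonormal basis $(e_k)$ of this Cameron--Martin space consists of functions $e_k=Tf_k$, each of which satisfies $Be_k=0$ on $\partial D$ by construction, so every summand of $\widehat{Tr}\,\widehat T\dot W$ (respectively $\widehat{\partial_n}\,\widehat T\dot W$, or the Robin combination) vanishes and the boundary condition holds in $H_{sc}(\partial D)$.

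The heart of the argument, and the step I expect to be most delicate, is condition (2), the distributional identity $-\Delta\widehat T\dot W+\lambda\widehat T\dot W=\dot W$. I would test against a fixed $\phi\in C_0^\infty(D)$, set $\psi:=-\Delta\phi+\lambda\phi\in C_0^\infty(D)\subset H^r(D)$, and apply the measurable transpose of Corollary~\ref{cor:multi}(ii) to write $\langle \widehat T\dot W,\psi\rangle_{H^{-r}(D),H^r(D)}=T^*\psi(\dot W)$ almost surely. The announced one-liner is the Green identity: for $f\in L^2(D)$ with $u=Tf$, the compact support of $\phi$ kills all boundary terms in all three cases, so $\langle Tf,\psi\rangle=\int_D u(-\Delta\phi+\lambda\phi)\,dx=\int_D(-\Delta u+\lambda u)\phi\,dx=(f,\phi)_{L^2(D)}$, which shows $T^*\psi=\phi$ in $L^2(D)$. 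Consequently $\langle \widehat T\dot W,\psi\rangle=\dot W(\phi)$ almost surely, i.e.\ the weak equation holds against $\phi$; running $\phi$ through a countable dense subset of $C_0^\infty(D)$ upgrades this to the full distributional statement on a single set of full measure.

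Having verified (1)--(3), I would finally invoke the pathwise uniqueness part of Theorem~\ref{th:ex} to conclude $X=\widehat T\dot W$ almost surely. The main technical care the proof still demands is the separation of the two sample spaces involved: $\dot W$ must be realised in some $H^{-s}(D)$ with $s>d/2$ so that it is a genuine $H^{-s}(D)$-valued variable, whereas $\widehat T\dot W$ gains a derivative and lands in $H^{-r}(D)$ with the weaker $r>d/2-1$. Keeping the Hilbert--Schmidt requirements of Theorem~\ref{tsir} straight across these two spaces, and checking that the transpose computation above is insensitive to that choice, is the bookkeeping that must be handled with care.
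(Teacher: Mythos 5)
Your proposal is correct and follows essentially the same route as the paper: both construct $\widehat T\dot W$ via Theorem \ref{tsir}, verify conditions (1)--(3) of Theorem \ref{th:ex} using Corollary \ref{cor:multi} and the $L^2(D)\to H^1(D)$ stability of $T$, and conclude by pathwise uniqueness. The only cosmetic difference is in condition (2): the paper applies the (distributionally continuous) operator $-\Delta+\lambda$ termwise to the series $\sum_{k}\widehat f_k(\dot W)\,Tf_k$ and uses $(-\Delta+\lambda)Tf_k=f_k$, whereas you dualize against $\psi=-\Delta\phi+\lambda\phi$ and compute $T^*\psi=\phi$ by Green's identity --- both are valid.
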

\begin{proof} [Theorem \ref{th:ex} and \ref{thm:t}]
We consider only  Robin boundary values. Other boundary conditions are handled 
similarly  (for the Neumann case, choose  e.g. $\beta =0$). For the existence of the Gaussian field, we represent  white  noise
$\dot W$ as 
  $$
  \dot W=\sum_{k=1}^\infty \widehat f_k(\dot W)  f_k,
  $$
  where $(f_k) $  is an orthonormal basis of $L^2(D)$ (see Corollary \ref{cor:multi} (iv)).  

 Now define  $T$   as in Theorem \ref{thm:t}. Such  $T$ exists according to  the well-known theory  of elliptic BVPs   (e.g. Theorem 4.11 in \cite{mclean}).
  
    Then the random series 
  $$
  X :=\sum_{k=1}  \widehat f_k(\dot W)    T f_k  
 $$
is convergent in $H^{-s}(D)$ for any $s>{d/2}$  by Theorem \ref{tsir} and   defines a zero mean Gaussian random field, whose Cameron-Martin space is  
  $T(L^2)$  equipped with the norm
  $$
 \Vert T f \Vert _{H_{\mu_X}} = \Vert f \Vert _{L^2(D) } 
  $$
  by Corollary \ref{cor:multi}.
Application of the well-known stability estimate 
$$
\Vert T f \Vert_{H^1(D)} \leq C \Vert f\Vert_{L^2(D)}
$$
shows that  the Cameron-Martin space $H_{\mu_X}$ can be continuously included in 
$H^1(D)$. This shows also that the realizations of   $X$    belong to $H^{-r}(D)$ by 
Lemma \ref{lem:2}.  Furthermore,  the  operator  $-\Delta +\lambda  $ is  continuous  on distributions, and we have    
  $$
  (-\Delta +\lambda  )X= \sum_{k=1}^\infty \widehat f_k(\dot W)  ( -\Delta +\lambda ) T f _k    =\dot W
  $$ 
  almost surely, since $ ( -\Delta +\lambda ) T f _k =f_k $ for all $k$.   Moreover, by Corollary \ref{cor:multi} 
  \begin{eqnarray*}
  \widehat{\partial_n} X + \beta \widehat{Tr} X&=&  \widehat{\partial_n}  \widehat T  \dot W + \beta \widehat{Tr}   \widehat  T  \dot W
  =
  \widehat  { \partial _n  T } \dot W +\beta \widehat{ Tr T } \dot W \\
  &=&   \sum_{k=1}^\infty  \widehat {f_k}( \dot W)       (\partial _n  + \beta Tr ) T f_k  =0
  \end{eqnarray*}
  almost surely.
  
 To prove the uniqueness, assume  that there are two solutions $X$ and $\widetilde X$ with measurable boundary 
 operators $\widehat B$   and $\widetilde B$, respectively. We show that 
 the Cameron-Martin space of $X-\widetilde X$ is  then the trivial space $\{0\}$.  Since the Cameron-Martin spaces of $X$ and $\widetilde X$ are included  continuously in $H^1(D)$,
  also the Cameron-Martin space of  $X-\widetilde X$ is included continuously in $H^1(D)$.  
  (Indeed,  by Theorem 3.3.4 in \cite{bog} it  suffices to consider  
  continuous linear forms and apply Cauchy-Schwartz inequality).
  
  By assumption,  $(-\Delta + \lambda)(X-\widetilde X)=0$ almost surely. Hence,  the same holds for 
  all functions $u\in H^1(D)$ that belong to  the Cameron-Martin space of $X-\widetilde X$
   by linearity  of $-\Delta +\lambda$. Indeed, we may consider countably many 
   continuous linear functionals that separate the points of $H^{-r-2}(D)$ and 
   apply Theorem 2.10.7 in \cite{bog}.    Moreover,  $\widehat B u  =B u= \widetilde  B u $ in $H^{-\frac{1}{2}}(\partial D)$ by Lemma \ref{lem:1}.  
  By uniqueness of the deterministic problem $u\equiv 0$.  But Cameron-Martin space 
  reduces to $\{0\}$  only   if
  $X=\widetilde X$ a.s. 
    \qed\end{proof} 
  
   \section{Improvements in global regularity}\label{sec:5}

  We first show when the  solutions of the BVP's with the white noise 
  load  are square integrable. According to the following theorem,  the  dimension of the space and regularity of the domain have the key role. 
  
  \begin{theorem}
Let $X$ and $T$ be   as in Theorem \ref{thm:t}. Then $X\in L^2(D)$ almost surely if 
and only if $T:L^2(D)\rightarrow L^2(D)$   is a Hilbert-Schmidt operator. 
 \end{theorem}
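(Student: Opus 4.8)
The plan is to characterize square-integrability of the solution $X$ via its Cameron-Martin structure. Recall from Theorem \ref{thm:t} that $X=\widehat T\dot W$, where $T:L^2(D)\rightarrow H^1(D)$ is the solution operator of the deterministic problem. By Corollary \ref{cor:multi}(i), the Cameron-Martin space of $X$ is $T(L^2(D))$ with the norm $\Vert Tf\Vert_{H_{\mu_X}}=\Vert f\Vert_{L^2(D)}$. The key general fact I would invoke is the standard criterion for a Gaussian random variable to take values almost surely in a separable Hilbert space $\mathbb H$: a zero-mean Gaussian $X$ has realizations in $\mathbb H$ almost surely if and only if the inclusion of its Cameron-Martin space $H_{\mu_X}$ into $\mathbb H$ is a Hilbert-Schmidt mapping (this is the Hilbert-Schmidt characterization underlying Corollary 3.5.11 in \cite{bog}, already used implicitly in Section \ref{sec:1}).

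First I would set $\mathbb H=L^2(D)$ and observe that the inclusion $H_{\mu_X}\hookrightarrow L^2(D)$ is, by the isometry $C_X:\mathbb B^*_{\mu_X}\to H_{\mu_X}$ and the identification of the Cameron-Martin space as $T(L^2(D))$, exactly the composition of the isometric identification $L^2(D)\to H_{\mu_X}$, $f\mapsto Tf$, with the inclusion into $L^2(D)$. Concretely, picking the orthonormal basis $\{f_k\}$ of $L^2(D)$ used in the proof of Theorem \ref{thm:t}, the vectors $\{Tf_k\}$ form an orthonormal basis of $H_{\mu_X}$, so the Hilbert-Schmidt norm of the inclusion $H_{\mu_X}\hookrightarrow L^2(D)$ equals
\begin{equation}
\sum_{k=1}^\infty \Vert Tf_k\Vert_{L^2(D)}^2,
\end{equation}
which is precisely the square of the Hilbert-Schmidt norm of $T:L^2(D)\to L^2(D)$. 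Thus the inclusion of the Cameron-Martin space into $L^2(D)$ is Hilbert-Schmidt if and only if $T$ regarded as an operator $L^2(D)\to L^2(D)$ is Hilbert-Schmidt, since the Hilbert-Schmidt property and the value of the norm are independent of the chosen orthonormal basis.

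Then I would combine the two equivalences: $X\in L^2(D)$ almost surely $\iff$ the inclusion $H_{\mu_X}\hookrightarrow L^2(D)$ is Hilbert-Schmidt $\iff$ $T:L^2(D)\to L^2(D)$ is Hilbert-Schmidt. The forward direction of the Gaussian criterion ($\mu_X(L^2(D))=1$ forces the inclusion to be Hilbert-Schmidt for a genuinely infinite-dimensional Cameron-Martin space) is the part requiring care; the converse is the construction already echoed in Theorem \ref{tsir}, where summability of the variances $\sum_k\Vert Tf_k\Vert_{L^2(D)}^2$ yields almost sure convergence of the defining series in $L^2(D)$. I expect the main obstacle to be a careful handling of measurability and the precise statement of the Gaussian Hilbert-Schmidt dichotomy — in particular ensuring that $X$, a priori only $H^{-r}(D)$-valued, may legitimately be tested for membership in the smaller space $L^2(D)$ as a Borel event, and that the two Hilbert-space inner product structures ($H_{\mu_X}$ via $T$, and the ambient $L^2(D)$) are kept distinct throughout the Hilbert-Schmidt computation.
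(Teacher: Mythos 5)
Your proposal is correct and in substance the same as the paper's argument: both reduce the claim to the identity $\mathbb E\Vert X\Vert_{L^2(D)}^2=\Vert T\Vert_{HS}^2$ together with the Gaussian dichotomy between almost-sure membership in $L^2(D)$ and finiteness of the second moment. The only cosmetic difference is that you compute the Hilbert--Schmidt norm of the Cameron--Martin embedding via $\sum_k\Vert Tf_k\Vert_{L^2(D)}^2$, whereas the paper expands $\mathbb E\Vert X\Vert_{L^2(D)}^2$ against a smooth orthonormal basis and lands on $\sum_k\Vert T^*\phi_k\Vert_{L^2(D)}^2$, invoking $\Vert T\Vert_{HS}=\Vert T^*\Vert_{HS}$; your version is, if anything, more explicit about the ``only if'' direction.
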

\begin{proof}
Consider an orthonormal basis $\{\phi_k\}$ of $L^2(D)$ consisting of smooth 
functions.  Then $\langle X , \phi_k\rangle _{H^{-r}, H^{r}}$ are  well-defined random variables  and we can  study finiteness of  the expectation 
$$
\mathbb E  \sum_{k=1}^\infty   \langle X, \phi_k\rangle ^2 = 
\sum_{k=1}^\infty  \mathbb E \langle \dot W, T^* \phi_k\rangle^2= 
\sum_{k=1}^\infty \Vert T^*\phi_k\Vert^2_{L^2(D)}.
$$
This shows the claim, since $T$ is a Hilbert-Schmidt operator if and only if
$T^*$ is a Hilbert-Schmidt operator.
\qed
\end{proof}

  It is often more natural to show that the realizations are actually continuous.
  Let us equip the H\"older space $C^{0,\alpha}(D) $, $0<\alpha<1$ with the usual norm 
  $$
  \Vert f\Vert_{C^{0,\alpha}}=  \sup_{x\in D}  |f(x) | + \sup_{x,y\in D, \
  x\not=y} \frac{|f(x)-f(y)|}{|x-y|^\alpha}
  $$
  \begin{theorem}
  Let $X$ and $T$ be   as in Theorem \ref{thm:t}.   If the regular solution operator
$T:L^2(D)\rightarrow C^{0,\alpha}(\overline D)$ continuosly, then 
  $X$ has a.s. continuous realizations.
  \end{theorem}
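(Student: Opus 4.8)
The plan is to construct the continuous version of $X$ directly from a scalar Gaussian field indexed by the points of $\overline D$ and then to identify it with $X=\widehat T\dot W$. Since $T:L^2(D)\to C^{0,\alpha}(\overline D)$ is bounded, its transpose $T^*:(C^{0,\alpha}(\overline D))^*\to L^2(D)$ is bounded with $\Vert T^*\Vert=\Vert T\Vert=:C$. For each $x\in\overline D$ the point evaluation $\delta_x$ belongs to $(C^{0,\alpha}(\overline D))^*$, so I would set
\[
Y_x:=\dot W(T^*\delta_x),
\]
using the white noise functionals \eqref{wn}. This is the rigorous substitute for the pointwise value ``$X(x)$'': if $\dot W$ were a genuine $g\in L^2(D)$, then $\dot W(T^*\delta_x)=(g,T^*\delta_x)_{L^2(D)}=(Tg)(x)$. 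Each $Y_x$ is a centered Gaussian random variable whose covariance is governed by the $L^2$-inner product, because the Cameron--Martin space of $\dot W$ is $L^2(D)$.

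The key step is to control the intrinsic metric of this field. Using $\mathbb E\,\dot W(g)\dot W(h)=(g,h)_{L^2(D)}$ together with the elementary bound $\Vert\delta_x-\delta_y\Vert_{(C^{0,\alpha}(\overline D))^*}\le|x-y|^\alpha$, which follows at once from the definition of the H\"older norm, I would obtain
\[
\mathbb E\,(Y_x-Y_y)^2=\Vert T^*(\delta_x-\delta_y)\Vert_{L^2(D)}^2\le C^2\,|x-y|^{2\alpha}.
\]
Since $Y_x-Y_y$ is Gaussian, all even moments are controlled by the second moment, giving $\mathbb E|Y_x-Y_y|^{2p}\le c_p\,C^{2p}|x-y|^{2\alpha p}$ with $c_p=(2p-1)!!$. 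Choosing $p>d/(2\alpha)$ and invoking the Kolmogorov--Chentsov continuity theorem then produces a modification $\widetilde Y$ of $(Y_x)$ whose paths lie in $C^{0,\beta}(\overline D)$ for every $\beta<\alpha$; in particular $\widetilde Y$ is a $C(\overline D)$-valued Gaussian random variable.

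It remains to show that $\widetilde Y$ is a version of $X$ in $H^{-r}(D)$. Taking $\phi$ from a countable family separating the points of $H^{-r}(D)$, I would compute on one hand $\langle X,\phi\rangle=\langle\widehat T\dot W,\phi\rangle=\widehat{T^*\phi}(\dot W)$ by the measurable transpose identity of Corollary \ref{cor:multi}(ii), and on the other hand $\langle\widetilde Y,\phi\rangle=\int_D\widetilde Y(x)\phi(x)\,dx=\int_D\dot W(T^*\delta_x)\phi(x)\,dx$. A Fubini argument for Gaussian integrals, combined with $\int_D T^*\delta_x\,\phi(x)\,dx=T^*\phi$, shows that the two expressions agree almost surely for each such $\phi$, whence $X=\widetilde Y$ $P$-a.s. and $X$ has continuous (indeed H\"older) realizations.

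The step I expect to be the main obstacle is this final identification: a priori $X$ is only an $H^{-r}(D)$-valued object with no pointwise meaning, so the continuous field $\widetilde Y$ must be built independently and then certified as a genuine version of $X$. The delicate points are the joint measurability of $(x,\omega)\mapsto Y_x(\omega)$ underlying the Fubini step and the interchange of the spatial integral with the white noise functional $\dot W(\cdot)$; once these are in place, everything else reduces to the standard Gaussian moment estimates and Kolmogorov--Chentsov above.
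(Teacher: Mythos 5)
Your proof is correct, and its engine is the same as the paper's: the duality bound $\Vert T^*(\delta_x-\delta_y)\Vert_{L^2(D)}\le \Vert T\Vert\,|x-y|^{\alpha}$ coming from $\Vert\delta_x-\delta_y\Vert_{(C^{0,\alpha}(\overline D))^*}\le|x-y|^{\alpha}$, Gaussian equivalence of moments, and Kolmogorov's criterion. The packaging differs: the paper works with the finite-rank partial sums $\widehat T_N\dot W=\sum_{k\le N}\widehat f_k(\dot W)Tf_k$, establishes the increment bound uniformly in $N$, and invokes Kolmogorov as a \emph{tightness} criterion for the laws of these continuous approximants, so the limit is by construction the measurable extension $\widehat T\dot W$ and no separate identification is needed. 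You instead define the field directly as $Y_x=\dot W(T^*\delta_x)$, extract a continuous modification via Kolmogorov--Chentsov, and then must certify $\widetilde Y=X$ in $H^{-r}(D)$; that last step is the only genuinely new content, and it goes through as you say because $x\mapsto T^*\delta_x$ is H\"older (hence Bochner integrable) from $\overline D$ into $L^2(D)$, the isometry $g\mapsto\dot W(g)$ commutes with Bochner integrals, and $\int_D T^*\delta_x\,\phi(x)\,dx=T^*\phi$ for $\phi$ in a countable separating family, matched against $\langle\widehat T\dot W,\phi\rangle=\dot W(T^*\phi)$ from Corollary \ref{cor:multi}(ii). Your version buys a cleaner statement of what the pointwise values of $X$ are (namely $\dot W(T^*\delta_x)$) and H\"older rather than mere continuity of paths, at the cost of the Fubini/identification step; the paper's version buys the identification for free but leaves the pointwise description implicit. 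One small point in your favor: the paper's displayed estimate has the exponent $|x-y|^{\alpha}$ where the squared increment should scale like $|x-y|^{2\alpha}$ as in your computation.
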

  \begin{proof}
  Let $(f_k)$ be an orthonormal basis of $L^2(D)$.  As a linear combination 
  of continuous functions, the random fields
  $$
\widehat T_N  \dot W :=   \sum_{k=1}^N \widehat f_k (\dot W) T f_k
  $$
  have  a.s. continuous realizations for all $N\in\mathbb N$. Moreover, the  a.s.  limit
  $\lim_{N\rightarrow \infty}  \widehat T_N \dot W (x)$ exists for each $x$. Indeed,   by the assumptions,  the composition of  $T$ with  pointwise evaluation $\langle \cdot, \delta_x\rangle$ is  a continuous linear functional   on
  the Cameron-Martin space of $W$. Hence,  it has a measurable linear extension
  $$
 b \mapsto   \sum_{k=1}^\infty \widehat f_k (b) T f_k(x)
  $$
   on  the sample space $H^{-r}(D)$ of the white noise.   Furthermore, the distributions of  the sequences
  $\widehat T_N \dot W (x)$ are tight on $H^{-r}(D)$ for fixed $x$. Moreover,
   $$
   \mathbb E  | \widehat T_N \dot W (x) - \widehat T_N \dot W (y) |^2 = 
    \sup_{\Vert f \Vert _{ L^2(D)}\leq 1 }   |T_N f (x) - T_N f(y)|^2 \leq  \Vert T \Vert _{L^2,C^{0,\alpha}} |x-y|^\alpha,
   $$  
   since $T_N f = T \sum_{k=1}^N \langle f,e_k\rangle e_k$ and 
   $$
   \Vert T_N f \Vert_{C^{0,\alpha}} \leq \Vert T\Vert_{L^2, C^{0,\alpha}} \Vert 
   \sum_{k=1}^N \langle f,e_k\rangle e_k\Vert _{L^2} \leq 
    \Vert T\Vert_{L^2, C^{0,\alpha}} \Vert f\Vert _{L^2}. 
   $$
   By Kolmogorov tightness criterium (e.g. see \cite{schilling} for a nice Besov space proof), the limit 
   $\widehat T \dot W =\sum_{k=1}^\infty \widehat f_k (\dot W) T f_k$   has a.s. continuous realizations.
  \qed\end{proof}

   The following familiar examples demonstrate the application of the above 
   theorem.
\begin{example}
\begin{enumerate}
\item When $D\subset \mathbb R^d$ is a bounded  Lipschitz domain, then
there exists such  $0<\alpha  <\frac{1}{2}$ that
$$
\Vert T f \Vert_{H^{1+\alpha }(D)}\leq C \Vert f\Vert _{L^2(D)}
$$
in Dirichlet   boundary value problems (see \cite{MR1600081}). 
The embedding $H^{1+\alpha }\hookrightarrow C^{0,\alpha }(\overline D)$ is continuous. Hence,
$X=\widehat T\dot W $ has a.s. continuous realizations. 

 \item When $D\subset \mathbb R^d $, $d=2,3$,   is  a bounded convex  Lipschitz domain   then
 $$
 \Vert T  f\Vert_{H^2(D)}\leq C \Vert f\Vert_{L^2(D)}
 $$ 
 for the Dirichlet problem (e.g \cite{MR775683}, Theorem 3.2.1.2).
 The embedding $H^2(D)\hookrightarrow  C^{0,\alpha }(\overline D)$ is 
 continuous for $0<\alpha <1/2$. Hence, the assumptions are satisfied and $X=\widehat T\dot  W$ has 
 a.s. continuous realizations.   
 
 \item When $D\subset \mathbb R^d$, $d=2,3$, then 
 $$
 \Vert T f \Vert_{C^{0,\alpha}}\leq C \Vert f\Vert
 $$ 
 in the case of  Robin or Neumann boundary conditions (see \cite{MR2812574}, 
 Theorem 3.14).
 \end{enumerate}
 
   \end{example}

\section{Approximations}\label{sec:6}

  At this point, it is yet unclear if the generalizations of the Neumann and Robin 
  boundary values  have any more value 
  than mathematical eccentricity.  However,  we show now that when white  noise is replaced with its  regular approximations,  the   corresponding  approximative solutions converge to the  solution of the generalized problem. This clarifies  the generalizations from a practical point of view.    In the same spirit, we study convergence of  Galerkin approximations of  high-dimensional problems  in 
  Theorem \ref{the:convergence}.   We emphasize  that such approximations are interesting, for example,  as priors in  numerical Bayesian estimation of unknown multivariable functions \cite{MR3209311}.   Finally, we consider in this section some   low-dimensional  problems as examples.

     The first convergence theorem concerns approximating the  white  noise only. 
        From various  possible approximations of  the white  noise, we first choose 
   the truncated sums in  the measurable linear  extension of the identity mapping. However, 
   the proof only requires $L^2$-convergence, and  transfers therefore  to a wider class of  approximations. 
   
   We use the generic notation $B$ for  any boundary operator appearing in 
   Theorem \ref{th:ex}. 
    \begin{theorem}
    Let $D\subset \mathbb R^d$   be a bounded Lipschitz domain,  $\dot W$ be the Gaussian white noise on $D$,     $(f_k)$   be an orthonormal basis of $L^2(D)$,  $r>d/2-1$  and 
    let $X$ be as in Theorem \ref{th:ex} .

      If   $H^1(D)$-valued random fields $X^{(m)}$, $ m\in\mathbb N$,    satisfy      \begin{eqnarray*}  %\label{eq:rf}
\begin{split} 
  \left\{
  \begin{array}{r l l}
- \Delta X^{(m)}  + \lambda X^{(m)}   &=  \dot W^{(m)}   & {\rm   \,  in\,  } D\\
  B X^{(m)}  &=  0  & {\rm  \, on\,  } \partial D, 
 \end{array}
 \right.
 \end{split}
 \end{eqnarray*}
  where $\dot W^{(m)}= \sum_{k=1}^m   f_k(\dot W) f_k$, then   $X^{(m)}$ converges to
$X$ in $L^2(\Omega,\Sigma,P; H^{-r}(D))$ as 
$m\rightarrow \infty$.    
\end{theorem}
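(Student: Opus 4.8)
The plan is to reduce everything to the series representation of $X$ furnished by Theorem~\ref{thm:t} and to recognize each approximation $X^{(m)}$ as a partial sum of that very series.

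First I would pin down $X^{(m)}$ explicitly. Because $(f_k)$ is orthonormal in $L^2(D)$, the truncated load $\dot W^{(m)}=\sum_{k=1}^m f_k(\dot W)\,f_k$ is, for $P$-almost every $\omega$, an honest element of $L^2(D)$: a finite linear combination of basis functions with random coefficients. The deterministic theory behind the solution operator $T$ of \eqref{eq:bp} then applies pathwise, and linearity of $T$ gives
\[
X^{(m)}=T\dot W^{(m)}=\sum_{k=1}^m f_k(\dot W)\,Tf_k .
\]
On the other hand, Theorem~\ref{thm:t} together with the series \eqref{eq:tsir} of Theorem~\ref{tsir} (recall that the Cameron--Martin space of $\dot W$ is $L^2(D)$, so $(f_k)$ plays the role of the orthonormal basis and $f_k(\dot W)=\widehat f_k(\dot W)$) identifies $X=\widehat T\dot W=\sum_{k=1}^\infty f_k(\dot W)\,Tf_k$. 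Hence the error is exactly the tail
\[
X-X^{(m)}=\sum_{k=m+1}^\infty f_k(\dot W)\,Tf_k .
\]

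Next I would evaluate its $L^2(\Omega,\Sigma,P;H^{-r}(D))$-norm. The functionals $f_k(\dot W)$ are jointly Gaussian, mean zero, and uncorrelated, since $\mathbb E\,f_k(\dot W)f_j(\dot W)=\langle f_k,f_j\rangle_{L^2(D)}=\delta_{kj}$. Expanding the squared Hilbert-space norm in $H^{-r}(D)$ and taking expectations, the cross terms drop out and one is left with the essentially one-line identity
\[
\mathbb E\,\big\|X-X^{(m)}\big\|_{H^{-r}(D)}^2=\sum_{k=m+1}^\infty \|Tf_k\|_{H^{-r}(D)}^2 .
\]
It then remains to show that the full series $\sum_{k=1}^\infty\|Tf_k\|_{H^{-r}(D)}^2$ converges, for then its tail vanishes as $m\to\infty$. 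This is where the hypothesis $r>d/2-1$ enters: $T\colon L^2(D)\to H^1(D)$ is bounded by the stability estimate, while the inclusion $H^1(D)\hookrightarrow H^{-r}(D)$ is Hilbert--Schmidt by Maurin's theorem exactly as in the proof of Lemma~\ref{lem:2}. Thus the composition $L^2(D)\to H^{-r}(D)$ is Hilbert--Schmidt, its Hilbert--Schmidt norm equals $\big(\sum_k\|Tf_k\|_{H^{-r}(D)}^2\big)^{1/2}<\infty$ and is independent of the basis, and the remainder tends to zero.

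The computation itself is trivial; the only points demanding care are the bookkeeping steps that make the one-liner legitimate, namely (i) that $\dot W^{(m)}$ lies in $L^2(D)$ pathwise so that $X^{(m)}=T\dot W^{(m)}$ is the genuine $m$-th partial sum of the measurable-extension series, and (ii) the term-by-term passage of the expectation through the infinite sum, i.e. $L^2(\Omega;H^{-r}(D))$-convergence of the Gaussian series. Both are already underwritten by the Hilbert--Schmidt convergence established in Theorem~\ref{tsir}, so once the representation $X=\sum_k f_k(\dot W)\,Tf_k$ is secured the conclusion is immediate.
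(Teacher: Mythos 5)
Your proposal is correct and follows essentially the same route as the paper: identify $X^{(m)}$ as the $m$-th partial sum of the Gaussian series $X=\widehat{T}\dot W=\sum_k \widehat f_k(\dot W)\,Tf_k$, and conclude from the fact that $T\colon L^2(D)\to H^{-r}(D)$ is Hilbert--Schmidt (boundedness into $H^1(D)$ composed with Maurin's embedding). The only cosmetic difference is that you evaluate the tail $\sum_{k>m}\|Tf_k\|^2_{H^{-r}(D)}$ directly in the domain basis, whereas the paper computes the same quantity as $\sum_\ell\|(P_m-I)T^*g_\ell\|^2_{L^2(D)}$ over a dual basis of $H^r_0(D)$ and invokes dominated convergence.
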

\begin{proof}
The realizations of the load   $\dot W^{(m)}$ are almost surely  in $L^2(D)$. Hence,  it is easy to see that the  unique field $X^{(m)}$ exists for every $m\in\mathbb N$.
 
 Denote with $T f :=u$ the solution of 
 
 \begin{eqnarray}
\begin{split} 
  \left\{
  \begin{array}{r l l}
- \Delta u  + \lambda u   &=  f   & {\rm   \,  in\,  } D\\
    Bu =  0  & {\rm  \, on\,  } \partial D.
 \end{array}
 \right. 
 \end{split}\label{eq:bvp}
 \end{eqnarray}
  Then  $X=\widehat T \dot W$ almost surely by 
  Theorem \ref{thm:t}. Moreover,    
  $T:L^2(D)\rightarrow H^1(D)$ continuously and the mapping  
  $T: L^2(D) \rightarrow  H^{-r}(D)$ is Hilbert-Schmidt 
  (see   \cite{MR0162126}).

   Denote with $P_m:L^2(D)\rightarrow L^2(D)$ the orthogonal projection onto the  subspace spanned by  $\{f_1,\dots,f_m\}$. Then 
  $X^{(m)}= T \dot W^{(m)} =  T \widehat P_m  \dot W = \widehat{TP_m}  \dot W$ by  Corollary \ref{cor:multi}. Denote with $(g_\ell)$  the orthonormal basis of $H^{r}_0(D)$, which we identify with the dual space of $H^{-r}(D)$. By Corollary \ref{cor:multi},
  \begin{eqnarray*}
  \mathbb E  \Vert  X^{(m)} - X\Vert_{H^{-r}(D)} ^2 &=& 
  \mathbb E    \sum_{\ell =1}^\infty  \langle  X^{(m)}- X , g_\ell\rangle_{H^{-r}(D), H^{r}_0(D)}  ^2 \\
  &=&
  \sum_{\ell=1}^\infty \mathbb E   (P_m-I) T^*g_\ell  (  \dot W)   ^2 
  \\ 
  &=& 
  \sum_{\ell=1}^\infty  \Vert (P_m -I)T^* g_\ell \Vert_{L^2}^2,
 \end{eqnarray*}
  which converges to zero as $m\rightarrow \infty$  by Lebesgue's dominated 
  convergence, since    $T:  L^2(D)\rightarrow H^{-r}(D)$ is a Hilbert-Schmidt operator
   and   $$\Vert (P_m -I)T^* g_\ell\Vert_{L^2(D)}\leq \Vert T^*g_\ell\Vert_{L^2(D)}.$$
  \qed\end{proof}

 Next, we study  different approximations of the problem arising from 
 Ritz-Galerkin methods.    As a preliminary step,  we  clarify connections between  certain measurable linear forms and $L^2$-regular  approximations of the white noise. We anticipate  FEM by indexing the  Galerkin subspaces with  $h>0$, which is typically connected to the size of elements in a finite element mesh.
\begin{lemma}\label{lemma:projection}
Let $Q_h:L^2(D) \rightarrow L^2(D)$ be the orthogonal projection onto a finite-dimensional linear subspace $V_h$ of  $ L^2(D)$.  Then $\widehat Q_h \dot W\in V_h$ almost surely.  \end{lemma}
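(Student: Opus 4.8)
The plan is to exploit the finite rank of $Q_h$ so that the defining series \eqref{eq:tsir} for its measurable linear extension collapses to a finite sum whose terms visibly lie in $V_h$. Recall from the text that the Cameron-Martin space of the white noise $\dot W$ is $L^2(D)$, so that $Q_h$ is precisely a continuous linear mapping $Q_h:H_{\mu_{\dot W}}\to L^2(D)$ defined on the Cameron-Martin space, and Theorem \ref{tsir} applies to produce $\widehat Q_h$. Since the range $V_h$ is finite dimensional, I would take $\mathbb H=\mathbb B_2=V_h$; the inclusion $V_h\hookrightarrow V_h$ is the identity on a finite-dimensional Hilbert space and hence Hilbert-Schmidt, so the hypotheses of Theorem \ref{tsir} are met and $\widehat Q_h$ maps the sample space of $\dot W$ into $V_h$, with the coefficients $\widehat f_k(\dot W)$ the usual white noise functionals.

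The key step is to choose the orthonormal basis conveniently. First I would fix an orthonormal basis $(f_k)$ of $L^2(D)$ whose first $N=\dim V_h$ elements $f_1,\dots,f_N$ form an orthonormal basis of $V_h$, completing it arbitrarily to a basis of $L^2(D)$. With this choice $Q_h f_k=f_k$ for $k\le N$ and $Q_h f_k=0$ for $k>N$, so the series \eqref{eq:tsir} reduces to the finite sum
$$
\widehat Q_h \dot W = \sum_{k=1}^N \widehat f_k(\dot W)\, f_k,
$$
which is a finite linear combination of the vectors $f_1,\dots,f_N\in V_h$ and hence an element of $V_h$ almost surely. To see that the conclusion does not hinge on this particular adapted basis, I would invoke the uniqueness part of Theorem \ref{tsir}: any two measurable linear extensions of $Q_h$ agree $\mu_{\dot W}$-almost surely, so the finite-sum representation obtained above coincides a.s. with the extension written using any other orthonormal basis. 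Alternatively, for an arbitrary basis the partial sums of \eqref{eq:tsir} all lie in $V_h$, which is closed as a finite-dimensional subspace, so the $\mu_{\dot W}$-a.s. limit again lands in $V_h$.

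I do not anticipate a genuine analytic obstacle here, precisely because the finite rank of $Q_h$ removes every convergence question; the only point requiring care is the bookkeeping that replaces the generic infinite series \eqref{eq:tsir} by the adapted finite sum, which is exactly what the uniqueness statement in Theorem \ref{tsir} licenses.
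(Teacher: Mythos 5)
Your proposal is correct and follows essentially the same route as the paper: the paper also chooses an orthonormal basis of $L^2(D)$ whose first $K$ elements span $V_h$, so that the series \eqref{eq:tsir} collapses to the finite sum $\sum_{k=1}^K \widehat e_k(\dot W)\,e_k \in V_h$. Your additional remarks on basis-independence via the uniqueness clause of Theorem \ref{tsir} are sound but not needed beyond what the paper records.
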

 \begin{proof}
 Let $\{e_k\}$ be an orthonormal  basis of $L^2(D)$ whose $K$ first components 
 span $V_h$. Then 
 $$
 \widehat Q_h \dot W = \sum_{k=1}^K  \widehat e_k( \dot W) e_k \in V_h
 $$
 almost surely.  
 % A direct calculation shows that a proper linear version of $\phi$ satisfies 
 %$$
 %\phi( \dot W ) =\phi(\widehat Q \dot W  +   (I- \widehat Q)  \dot W)=  \phi (\widehat Q \dot W )+ 
 %\phi ( (I- \widehat {Q \dot W),   
 % $$
 % almost surely,$$
  
% $$
 %\langle \widehat Q_h \dot W , \phi\rangle = \sum_{k=1}^K     \widehat e_k (\dot W)  \langle e_k, \phi\rangle  =  \sum_{k=1}^\infty  \widehat e_k (\dot W)  \langle e_k, \phi\rangle =  \phi(\dot W) 
 % $$
% by orthogonality.  Hence the equality \eqref{eq:equality} holds for all $\phi\in V_h$ outside a common zero measurable set.
  \qed\end{proof}

The following theorem demonstrates how convergence of  Ritz-Galerkin approximations
 reduces to convergence of regular cases.  Here the Ritz-Galerkin approximations always include also approximations of the white noise with orthogonal projections as 
 in Lemma \ref{lemma:projection}.
We focus on Neumann and Robin problems, which have not been studied before.

\begin{theorem}\label{the:convergence}
Let $D\subset \mathbb R^d$ be a bounded Lipschitz domain,    $\dot W$ be the Gaussian white noise on $D$, $\lambda, \beta >0$, $r>d/2-1$,  and let $X$ be as in Theorem 
\ref{th:ex} in the case  of  Robin boundary value. 

Let  $V_h$  be a finite-dimensional linear subspace of   $ H^1(D)$,  and let $X_h$ be a  $V_h$-valued random variable  that  satisfies
\begin{equation}
\int_ D \nabla X_h  \cdot \nabla \phi dx + \lambda \int_ D 
X_h \phi dx + \beta \int_{\partial D} X_h \phi d\sigma =  \int_D  ( \widehat Q_h \dot W)  \phi  dx  
\end{equation}
 for all $\phi\in V_h$, where $Q_h$ is the orthogonal projection onto $V_h$ in 
 $L^2(D)$. Then 
\begin{equation}\label{eq:upb}
\mathbb E \Vert X- X_h\Vert_{H^{-r}(D)}^2 \leq C \sup_{\Vert f\Vert_{L^2(D)}\leq 1} 
\Vert u^f -u^f_h\Vert^2_{H^{1}(D)},
\end{equation}
where  $u^f$ is the solution of 
 \begin{eqnarray}\label{eq:sol1}
\begin{split} 
  \left\{
  \begin{array}{r l l}
- \Delta u^f  + \lambda u^f   &=  f   & {\rm   \,  in\,  } D\\
   \partial_n u^f  |_{\partial D} + \beta u^f |_{\partial D} &=  0  & {\rm  \, on\,  } \partial D,
 \end{array}
 \right.
 \end{split}
 \end{eqnarray} and  $u^f_h \in V_h$ is the Ritz-Galerkin 
approximation  defined by 
\begin{equation}\label{eq:sol2}
\int_ D \nabla u^f _h  \cdot \nabla \phi dx + \lambda \int_ D 
u^f _h \phi dx + \beta \int_{\partial D} u^f _h \phi d\sigma =  \int_D \phi f dx  
\end{equation}
 for all $\phi\in V_h$.

\end{theorem}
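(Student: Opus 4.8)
The plan is to reduce the stochastic error to the deterministic Ritz--Galerkin error $\|T-T_h\|$ in the operator norm $L^2(D)\to H^1(D)$, exactly in the spirit of the preceding white-noise-approximation theorem; the only genuinely new ingredient is a Hilbert--Schmidt factorisation of the inclusion $H^1(D)\hookrightarrow H^{-r}(D)$. First I would introduce the deterministic Galerkin solution operator $T_h\colon L^2(D)\to V_h\subset H^1(D)$, where $T_hf:=u^f_h$ is the solution of \eqref{eq:sol2}; since $V_h$ is finite-dimensional, $T_h$ is continuous from $L^2(D)$ into $H^1(D)$. Because the right-hand side of \eqref{eq:sol2} only tests against $\phi\in V_h$, one has $\int_D f\phi\,dx=\int_D(Q_hf)\phi\,dx$ for all $\phi\in V_h$, hence $T_h=T_hQ_h$. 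Combining this identity with the definition of $X_h$, Lemma~\ref{lemma:projection} (so that $\widehat Q_h\dot W\in V_h$ a.s.), the associativity of measurable compositions (Corollary~\ref{cor:multi}(iii)), and the fact that the proper linear version of $\widehat{T_h}$ restricts to $T_h$ on the Cameron--Martin space $L^2(D)\supset V_h$, I would establish
\[
X_h=T_h(\widehat Q_h\dot W)=\widehat{T_hQ_h}\,\dot W=\widehat{T_h}\,\dot W
\]
almost surely. Together with $X=\widehat T\dot W$ from Theorem~\ref{thm:t} and the linearity of the extension formula \eqref{eq:tsir} in $T$, this gives $X-X_h=\widehat{(T-T_h)}\,\dot W$.

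Next I would expand the error in an orthonormal basis $(g_\ell)$ of $H^r_0(D)$, identified with the dual of $H^{-r}(D)$, and evaluate the variance term by term. By the measurable transpose identity (Corollary~\ref{cor:multi}(ii)), together with the fact that $\dot W$ has Cameron--Martin space $L^2(D)$ and identity covariance, each term satisfies $\mathbb E\,\langle\widehat{(T-T_h)}\dot W,g_\ell\rangle^2=\mathbb E\,\bigl((T-T_h)^*g_\ell\bigr)(\dot W)^2=\|(T-T_h)^*g_\ell\|_{L^2(D)}^2$, so that
\[
\mathbb E\|X-X_h\|_{H^{-r}(D)}^2=\sum_{\ell=1}^\infty\|(T-T_h)^*g_\ell\|_{L^2(D)}^2=\|T-T_h\|_{\mathrm{HS}(L^2(D)\to H^{-r}(D))}^2 .
\]

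Finally I would factor $T-T_h=\iota\circ(T-T_h)$ through the inclusion $\iota\colon H^1(D)\hookrightarrow H^{-r}(D)$, which is Hilbert--Schmidt by Maurin's theorem \cite{MR0162126}, since $r>d/2-1$ yields a smoothness gap $1+r>d/2$. The ideal property $\|\iota B\|_{\mathrm{HS}}\le\|\iota\|_{\mathrm{HS}}\,\|B\|_{\mathrm{op}}$ then gives
\[
\mathbb E\|X-X_h\|_{H^{-r}(D)}^2\le\|\iota\|_{\mathrm{HS}(H^1(D)\to H^{-r}(D))}^2\,\|T-T_h\|_{\mathrm{op}(L^2(D)\to H^1(D))}^2 ,
\]
and since $\|T-T_h\|_{\mathrm{op}(L^2(D)\to H^1(D))}^2=\sup_{\|f\|_{L^2(D)}\le1}\|u^f-u^f_h\|_{H^1(D)}^2$, the claim follows with $C=\|\iota\|_{\mathrm{HS}}^2$. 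The main obstacle is the bookkeeping of the first step, namely justifying rigorously that $X_h=\widehat{T_h}\dot W$: one must verify that the Galerkin solution driven by the projected noise $\widehat Q_h\dot W$ agrees almost surely with the measurable linear extension of the deterministic Galerkin operator applied to $\dot W$, which is where the associativity and Cameron--Martin restriction properties of Corollary~\ref{cor:multi} are indispensable. Once this representation is in place, the remaining estimate is exactly the advertised one-line Hilbert--Schmidt computation.
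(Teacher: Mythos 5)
Your proposal is correct and follows essentially the same route as the paper: the representation $X_h=T_h(\widehat Q_h\dot W)=\widehat{T_hQ_h}\,\dot W=\widehat{T_h}\,\dot W$ via Lemma~\ref{lemma:projection} and Corollary~\ref{cor:multi}, the termwise variance identity $\mathbb E\langle X-X_h,g_\ell\rangle^2=\|(T-T_h)^*g_\ell\|_{L^2(D)}^2$, and the Hilbert--Schmidt/operator-norm factorisation through a Maurin embedding. The only cosmetic difference is that you factor on the primal side, $T-T_h\colon L^2(D)\to H^1(D)\hookrightarrow H^{-r}(D)$, whereas the paper factors the adjoint $T^*-T_h^*\colon H^r(D)\hookrightarrow H^{-1}(D)\to L^2(D)$; these are equivalent by duality and yield the same constant.
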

\begin{proof}
By Lemma \ref{lemma:projection},  the measurable linear forms $ \widehat Q_h \dot W \in V_h$ 
 a.s. By regularity of the load $\widehat Q_h \dot W $,  the standard proof of  existence and 
uniqueness of Ritz-Galerkin approximations holds also for  $X_h$. Moreover, we can write 
$X_h=T_h (\widehat Q_h \dot W)$, where $T_h$ takes regular  loads  from  
$L^2(D)$ to corresponding Ritz-Galerkin approximations in $V_h$.

By Corollary \ref{cor:multi},  $T_h \widehat Q_h \dot W = \widehat {T_h Q_h } \dot W $, which almost surely coincides with $\widehat T_h \dot W$ since $\langle  Q_h  f, \phi\rangle = \langle f,\phi\rangle$ for all $f\in L^2(D)$ and  $\phi\in V_h$ 
and, by uniqueness of the solution, $T_h Q_h =T_h$ on $L^2(D)$.

Denote with $\{\phi_k\}$ an orthonormal basis in $H^r(D)$, and with $\Vert L \Vert_{HS: V_1\rightarrow V_2}$ the Hilbert-Schmidt norm of a linear mapping $L$ from  a separable Hilbert space $V_1$ into a separable Hilbert space $V_2$. Then by 
Corollary \ref{cor:multi} (ii),
\begin{equation}\label{eq:oneliner}
\begin{split}
\mathbb E  \Vert X- X_h\Vert^2_{H^{-r}(D)} &= 
\sum_{k=1}^\infty  \mathbb E \langle X- X_h , \phi_k\rangle_{H^{-r}(D),H^r(D)}^2 \\
 & = \sum_{k=1}^\infty  \Vert (T^*- T^*_h)  \phi_k\Vert_{L^2(D)}^2 
 \\
 &=    \Vert T^*- T^*_h \Vert_{HS: H^{r}(D) \rightarrow L^2(D)}^2\\
 &\leq \Vert T^* - T^*_h\Vert _{H^{-1}(D),L^2(D)}\Vert I \Vert_{HS:H^r(D)\rightarrow H^{-1}(D)},
\end{split}
\end{equation}
where the embedding  $I$ of $H^{r}(D)$ into $H^{-1}(D)$ is Hilbert-Schmidt  
by Maurin's theorem. Moreover,
$$
 \Vert T^* - T^*_h\Vert_{H^{-1}(D),L^2(D)}=  \Vert T- T_h \Vert_{L^2(D),H^{1}(D)}
= \sup_{\Vert f\Vert_{L^2(D)}\leq 1} \Vert u^f - u^f_h\Vert_{H^{1}(D)}.
$$
\qed\end{proof} 

\begin{corollary} \label{cor:lipp}
Let the assumptions of Theorem \ref{the:convergence} hold.
If  the finite-dimensional  subspaces $V_h$   fulfill the condition
$$\lim_{h\rightarrow 0} \min _{v\in V_h} \Vert u- v \Vert_{H^1(D)} =0$$
for all  $u\in H^1(D)$,   then  $X_h$ converges to  $X$   in   $L^2(\Omega,\Sigma,P; H^{-r}(D))$ as $h\rightarrow 0$. 
\end{corollary}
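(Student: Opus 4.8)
The plan is to combine the quantitative error bound from Theorem~\ref{the:convergence} with the approximation hypothesis on the subspaces $V_h$, reducing the convergence of the stochastic Galerkin approximations to a purely deterministic question about convergence of Ritz-Galerkin approximations uniformly over the unit ball of $L^2(D)$. The starting point is inequality \eqref{eq:upb}, which already tells us that
\begin{equation*}
\mathbb E \Vert X- X_h\Vert_{H^{-r}(D)}^2 \leq C \sup_{\Vert f\Vert_{L^2(D)}\leq 1} \Vert u^f -u^f_h\Vert^2_{H^{1}(D)},
\end{equation*}
so it suffices to show that the right-hand side tends to $0$ as $h\rightarrow 0$. Thus the entire stochastic content has already been absorbed into the constant $C$ and the Hilbert-Schmidt one-liner; what remains is classical finite element theory.

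First I would invoke C\'ea's lemma for the Robin bilinear form $a(u,v)=\int_D \nabla u\cdot\nabla v\,dx + \lambda\int_D uv\,dx + \beta\int_{\partial D} uv\,d\sigma$. Since $\lambda,\beta>0$, this form is bounded and coercive on $H^1(D)$ (coercivity being uniform and independent of $h$), so C\'ea's lemma yields a constant $M$, independent of $f$ and of $h$, with $\Vert u^f - u^f_h\Vert_{H^1(D)}\leq M\,\min_{v\in V_h}\Vert u^f - v\Vert_{H^1(D)}$. Taking the supremum over $\Vert f\Vert_{L^2(D)}\leq 1$ gives
\begin{equation*}
\sup_{\Vert f\Vert_{L^2(D)}\leq 1}\Vert u^f - u^f_h\Vert_{H^1(D)} \leq M \sup_{\Vert f\Vert_{L^2(D)}\leq 1}\min_{v\in V_h}\Vert u^f - v\Vert_{H^1(D)}.
\end{equation*}

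The hypothesis $\lim_{h\rightarrow 0}\min_{v\in V_h}\Vert u-v\Vert_{H^1(D)}=0$ is stated pointwise in $u\in H^1(D)$, so the main obstacle is upgrading this pointwise best-approximation decay to a bound that is \emph{uniform} over the set $\{u^f:\Vert f\Vert_{L^2(D)}\leq 1\}$. The key observation making this possible is that this set is not merely bounded in $H^1(D)$ but is the image $T(B_{L^2})$ of the unit ball under the solution operator $T:L^2(D)\rightarrow H^1(D)$; by elliptic regularity $T$ maps into a space compactly embedded in $H^1(D)$ (e.g.\ $H^{1+\alpha}(D)$ as in the Example, or one exploits that $T$ as a map into $H^1(D)$ is compact via Maurin's theorem applied to the regularity gain), so $T(B_{L^2})$ is a \emph{precompact} subset of $H^1(D)$. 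On a precompact set, a pointwise-decaying family of quantities $E_h(u):=\min_{v\in V_h}\Vert u-v\Vert_{H^1(D)}$ that is uniformly Lipschitz in $u$ (indeed $\vert E_h(u)-E_h(w)\vert\leq\Vert u-w\Vert_{H^1(D)}$, being a distance to a subspace) converges uniformly: cover the compact closure by finitely many $\varepsilon$-balls, bound $E_h$ at the finitely many centers using the pointwise hypothesis, and control the remainder by the Lipschitz estimate plus monotonicity considerations. This classical precompactness-plus-equicontinuity argument is the heart of the matter and is where the real work lies; everything else is bookkeeping.

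Putting the pieces together, the uniform best-approximation error $\sup_{\Vert f\Vert_{L^2}\leq 1}\min_{v\in V_h}\Vert u^f-v\Vert_{H^1(D)}$ tends to $0$ as $h\rightarrow 0$, hence so does the Galerkin error supremum by C\'ea, hence the right-hand side of \eqref{eq:upb} vanishes in the limit, giving $\mathbb E\Vert X-X_h\Vert_{H^{-r}(D)}^2\rightarrow 0$, which is precisely convergence of $X_h$ to $X$ in $L^2(\Omega,\Sigma,P;H^{-r}(D))$. I would expect the write-up to be short, since Theorem~\ref{the:convergence} has done the analytical heavy lifting; the only nontrivial deterministic input is the compactness of $T$, which follows from the same elliptic regularity and Maurin's theorem already used repeatedly in the paper.
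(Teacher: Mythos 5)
Your proposal is correct and follows essentially the same route as the paper: reduce to the deterministic bound \eqref{eq:upb}, apply C\'ea's lemma, use the elliptic regularity gain $u^f\in H^{1+s}(D)$ to get precompactness of $\{u^f:\Vert f\Vert_{L^2(D)}\leq 1\}$ in $H^1(D)$, and conclude uniform convergence of the best-approximation error by a finite $\varepsilon$-net argument. The only cosmetic difference is that you phrase the equicontinuity step via the $1$-Lipschitz distance-to-subspace function $E_h$, while the paper uses the $H^1$-orthogonal projection $Q_h$ onto $V_h$; these are the same argument.
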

\begin{proof}
Let $u^f $ and $u^f_h$ satisfy \eqref{eq:sol1} and \eqref{eq:sol2}, respectively.
By Cea's lemma
\begin{eqnarray*}
 \sup_{\Vert f\Vert _{L^2(D)} \leq 1 } \Vert u^f  - u_h^f  \Vert_{ H^{1}(D)} 
  &\leq &   \sup_{\Vert f\Vert_{L^2(D)}\leq 1} \min_{ v\in V_h }\Vert   u^f  - v \Vert_{H^{1}(D)} ^2\\
&=&\sup_{\Vert f\Vert_{L^2(D)}\leq 1} \Vert (I-Q_h) u^f\Vert_{H^1(D)},
\end{eqnarray*}
where $Q_h$ is the orthogonal projection in $H^1(D) $ onto $V_h$.

By $H^1$-regularity of the solution $u^f$ of the  Robin BVP, the 
corresponding Neumann problem has boundary data $-\beta u^f|_{\partial D} 
\in H^\frac{1}{2}(\partial D)$. By Theorem 4 in \cite{MR1600081}, the unique 
solution  $u^f$ of this Neumann problem  belongs to $\in H^{1+s}(D)$  for some $0<s<\frac{1}{2}$.

The embedding of $H^{1+s}(D)\rightarrow  H^1(D) $ is compact. Under the mapping $A:=f\mapsto u^f$, the image of the 
unit ball  of $L^2(D)$ is relatively compact in $H^1(D)$.  For fixed  $\varepsilon>0$,  all  open balls  $B(\phi ,\varepsilon)$ of $H^1(D)$ whose center points  $\phi$ belong to the image of the unit ball under the mapping $A$,  form a  cover of the image  and  we may choose a finite subcover 
consisting of sets $B(\phi_k, \varepsilon)$, $k=1,\dots,K$.    For any $f$ satisfying 
$\Vert f\Vert_{L^2(D)}\leq 1$, we have  
\begin{eqnarray*}
\Vert (I-Q_h ) u^f \Vert _{H^1(D)} &\leq &  \Vert  u^f -\phi_k \Vert_{H^1(D)}   +
\sum_{k=1}^K \Vert (I-Q_h ) \phi_k  \Vert_{H^1(D)}\\
&\leq & \varepsilon + \sum_{k=1}^K
  \Vert (I-Q_h ) \phi_k  \Vert_{H^1(D)} 
\end{eqnarray*}
for some $k=1,\dots,K$. 
\qed\end{proof}

\begin{remark}
Replacing $H^1(D)$ with $H^1_0(D)$  in Theorem 
\ref{the:convergence} gives the same results for the 
homogeneous Dirichlet problem (see \cite{MR1600081} for 
the  required  regularity). 
\end{remark}

Many practical applications of boundary value problems involve finite element methods.  Below, we demonstrate how to estimate the  speed of  convergence of  finite   element  approximations. 
 For simplicity, 
we consider only two dimensional polygonal domains with triangular elements.   For details of 
finite element methods we refer to \cite{brenner}. 

The following corollary shows how  the speed of convergence  in finite element approximations depends on the regularity of the solution.   We emphasize that  the domain is allowed to be  nonconvex. 
\begin{corollary}
Let $X$ be as in Theorem \ref{th:ex} for $d=2$.
Let ${\mathcal T} _h$ be a  triangulation of a bounded polygonal domain  $D\subset \mathbb R^2$ into  elements of diameter at most  $h>0$ and let $V_h\subset H^1(D)$    be the corresponding finite element space consisting of  continuous functions.  Let the interpolant 
 $I_h$ satisfy
$
\Vert u -I_h u\Vert_{H^1(D)} \leq C h \Vert u\Vert_{H^2(D)}.  
$  
 Then there exists  $0<s<\frac{1}{2}$  such that the finite element approximations $X_h$ of  the solution 
$X$ satisfy
$$
\mathbb E \Vert X- X_h\Vert_{H^{-r}(D)} ^2 \leq  C h^{2s}.
$$
 \end{corollary}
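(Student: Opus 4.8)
The plan is to route everything through the ``one-liner'' \eqref{eq:upb} of Theorem~\ref{the:convergence}, which already collapses the stochastic error into a deterministic finite element error,
\[
\mathbb E \Vert X- X_h\Vert_{H^{-r}(D)}^2 \leq C \sup_{\Vert f\Vert_{L^2(D)}\leq 1} \Vert u^f -u^f_h\Vert^2_{H^{1}(D)} .
\]
Thus it suffices to produce a deterministic bound $\Vert u^f - u^f_h\Vert_{H^1(D)} \leq C h^{s}\Vert f\Vert_{L^2(D)}$ that is uniform over $f$, and then square it and take the supremum. First I would invoke Cea's lemma, exactly as in the proof of Corollary~\ref{cor:lipp}, to pass from the Galerkin error to the best approximation error, and bound the latter by the interpolation error of the given interpolant:
\[
\Vert u^f - u^f_h\Vert_{H^1(D)} \leq C \min_{v\in V_h}\Vert u^f - v\Vert_{H^1(D)} \leq C \Vert u^f - I_h u^f\Vert_{H^1(D)} .
\]

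The quantitative regularity input is the one already used in Corollary~\ref{cor:lipp}: Theorem~4 of \cite{MR1600081} supplies an exponent $0<s<\tfrac12$, depending only on the interior angles of the polygon, together with the a priori bound $\Vert u^f\Vert_{H^{1+s}(D)} \leq C\Vert f\Vert_{L^2(D)}$ valid for every $f\in L^2(D)$ in the Robin (hence also Neumann) case; this $s$ is the exponent in the statement, and the nonconvex corners are precisely what prevents taking $s=1$. Since $d=2$ and $s>0$, the Sobolev embedding $H^{1+s}(D)\hookrightarrow C(\overline D)$ holds, so the nodal interpolant $I_h$ is well defined on $H^{1+s}(D)$ even though $u^f$ need not lie in $H^2(D)$.

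The main step is then to upgrade the given integer-order estimate $\Vert u - I_h u\Vert_{H^1(D)} \leq C h\Vert u\Vert_{H^2(D)}$ to the fractional-order estimate
\[
\Vert u - I_h u\Vert_{H^1(D)} \leq C h^{s}\Vert u\Vert_{H^{1+s}(D)}, \qquad u\in H^{1+s}(D) .
\]
I would obtain this by the standard fractional interpolation error argument (see \cite{brenner}): working on each element via a scaling to the reference triangle and a Bramble--Hilbert estimate in the fractional Sobolev norm of order $1+s$, then summing over the triangulation of mesh size $h$, so that the half-integer power $h^{s}$ emerges from the fractional seminorm. Feeding $u=u^f$ into this estimate, using $\Vert u^f\Vert_{H^{1+s}(D)}\leq C\Vert f\Vert_{L^2(D)}$, and taking the supremum over the unit ball of $L^2(D)$ gives $\sup_f\Vert u^f - u^f_h\Vert_{H^1(D)}\leq C h^{s}$; squaring and substituting into \eqref{eq:upb} yields $\mathbb E\Vert X - X_h\Vert_{H^{-r}(D)}^2 \leq C h^{2s}$.

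I expect the fractional interpolation estimate to be the only genuinely delicate point. The subtlety is that the nodal interpolant is not bounded on $H^1(D)$ in two dimensions, so one cannot simply interpolate the error operator between the $H^1$ and $H^2$ endpoints; instead the argument must use pointwise values at regularity exactly $H^{1+s}$ with $s>0$, where $H^{1+s}(D)\hookrightarrow C(\overline D)$ makes $I_h u$ meaningful, while keeping all constants independent of $h$. Once this estimate is in hand, the rest is a direct assembly of Theorem~\ref{the:convergence}, Cea's lemma, and the corner regularity bound.
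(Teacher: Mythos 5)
Your proposal is correct and follows the same skeleton as the paper's proof: reduce the stochastic error to the deterministic one via \eqref{eq:upb}, invoke the corner regularity $\Vert u^f\Vert_{H^{1+s}(D)}\leq C\Vert f\Vert_{L^2(D)}$ from \cite{MR1600081}, and conclude from a fractional-order finite element error bound $\Vert u^f-u^f_h\Vert_{H^1(D)}\leq Ch^{s}\Vert u^f\Vert_{H^{1+s}(D)}$. The only divergence is in how that last bound is justified. You propose Cea's lemma followed by a fractional interpolation estimate for the nodal interpolant, proved element by element with a fractional Bramble--Hilbert argument, and you correctly flag that $I_h$ is not bounded on $H^1(D)$ in two dimensions so one must work at regularity exactly $H^{1+s}$ where $H^{1+s}(D)\hookrightarrow C(\overline D)$. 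The paper instead simply cites \cite{brenner} (operator-interpolation theory), where the standard device is to interpolate the Galerkin error operator $u\mapsto u-u_h$ between the endpoints $H^1(D)\to H^1(D)$ (bounded uniformly in $h$ by Cea) and $H^2(D)\to H^1(D)$ (bounded by $Ch$ via the assumed interpolant estimate); this sidesteps the unboundedness of $I_h$ on $H^1(D)$ entirely and is arguably the cleaner resolution of the very subtlety you identify. Both routes yield the same $h^{s}$ rate, and your assembly of the pieces into the final bound $\mathbb E\Vert X-X_h\Vert^2_{H^{-r}(D)}\leq Ch^{2s}$ matches the paper.
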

 \begin{proof}
 The solution $u^f$  to the homogeneous elliptic boundary value problem 
 \eqref{eq:bvp}   belongs  to $H^{1+s}(D)$ for some $s>0$ \cite{MR1600081} and    satisfies  $$
\Vert u^f \Vert_{H^{1+s}(D)}\leq c \Vert f\Vert_{L^2(D)},
$$
for all $f\in L^2(D)$.  The assumptions imply then that (e.g. \cite{brenner},  Chapter 12.3) 
 $$
 \Vert u^f - u_h^f \Vert_{H^1(D)}\leq  C h^{s} \Vert u^f \Vert_{H^{1+s}(D)},
 $$
where 
 $$
 \Vert u^f \Vert _{H^{1+s}(D)} \leq C \Vert f \Vert_{L^2(D)}.
 $$
 The last claim follows then from \eqref{eq:upb}.
 \qed\end{proof}

Next, we  improve the  previous convergence results  by replacing  $H^{-r}(D)$ spaces with  
 the space of continuous functions $C(\overline D)$ equipped with the 
usual supremum norm.  Instead of $L^2$-convergence, we study  weak convergence of probability distributions.  We apply the notations 
appearing in Theorem \ref{the:convergence}.
\begin{theorem}
Let $D\subset \mathbb R^2$ be a bounded Lipschitz domain. 
 Let $X$ be as in Theorem \ref{th:ex} and let $X_h$ be its finite element approximations.  Assume that the 
the solution $u^f$  of \eqref{eq:bvp} and its finite element approximations $u^f_h$   are such that 
$$
\lim_{h\rightarrow 0 }\sup_{\Vert f\Vert_{L^2(D)\leq 1}}  |u_h^{f}(x)-u^f(x)|=0
$$
for  any $x\in \overline D$ and there exists $\alpha,C>0$ such that
\begin{equation}\label{eq:yksi}
\sup_{\Vert f\Vert_{L^2(D)\leq 1}}|u_h^f (x)- u_h^f (y) |\leq  C |x-y|^\alpha
\end{equation}
for all $0<h<1$ and $x,y \in \overline D$.

Then the probability distribution of    $X_h$ converges to  the probability distribution of 
$X$ weakly in the sense of measures on $C(\overline{D})$.    
\end{theorem}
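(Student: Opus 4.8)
The plan is to establish weak convergence on the Polish space $C(\overline D)$ through the classical two-step scheme: first prove tightness of the family $\{X_h\}$, and then pin down the limit by matching finite-dimensional distributions. Both $X$ and $X_h$ are centered Gaussian fields with continuous realizations — the $X_h$ trivially, since each is an almost surely finite linear combination $\sum_k \widehat f_k(\dot W)\, T_h f_k$ of the continuous functions $T_h f_k\in V_h$, and $X$ by the regularity arguments of Section \ref{sec:5}. Because the Borel $\sigma$-algebra of $C(\overline D)$ is generated by the evaluation maps, once tightness is available Prokhorov's theorem reduces the claim to showing that every subsequential weak limit has the same finite-dimensional distributions as $X$.

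For tightness I would reuse the variance computation of Section \ref{sec:5}. Writing $X_h(x)=\sum_k \widehat f_k(\dot W)\, T_h f_k(x)$ as the measurable linear extension of the functional $f\mapsto u_h^f(x)$ on the Cameron-Martin space of $\dot W$ (cf. Theorem \ref{thm:t} and Corollary \ref{cor:multi}), independence and standard-normality of the coefficients $\widehat f_k(\dot W)$ give
\begin{equation*}
\mathbb E\,|X_h(x)-X_h(y)|^2 = \sum_{k=1}^\infty |T_h f_k(x)-T_h f_k(y)|^2 = \sup_{\Vert f\Vert_{L^2(D)}\le 1}|u_h^f(x)-u_h^f(y)|^2,
\end{equation*}
which by the standing hypothesis \eqref{eq:yksi} is bounded by $C^2|x-y|^{2\alpha}$, uniformly in $h$. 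Since $X_h$ is Gaussian, the Gaussian moment identity upgrades this to $\mathbb E\,|X_h(x)-X_h(y)|^{2p}\le c_p\,C^{2p}|x-y|^{2p\alpha}$ for every $p$. Choosing $p$ so large that $2p\alpha>d=2$, the Kolmogorov tightness criterion (see \cite{schilling}) yields a modulus-of-continuity control uniform in $h$; combined with the uniform bound on $\mathbb E\,|X_h(x_0)|^2=\sup_{\Vert f\Vert_{L^2(D)}\le 1}|u_h^f(x_0)|^2$ at a fixed point $x_0$ (finite for small $h$ by the pointwise-convergence hypothesis, and controlling all moments by Gaussianity), Arzel\`a--Ascoli gives tightness of the laws of $X_h$ on $C(\overline D)$.

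To identify the limit I would compute covariances. The pointwise-convergence hypothesis makes the functional $f\mapsto u^f(x)$ bounded on $L^2(D)$, so it has a Riesz representer $\ell_x\in L^2(D)$; similarly $f\mapsto u_h^f(x)$ is represented by $\ell_x^h\in L^2(D)$. The extension formula then gives, for $x_i,x_j\in\overline D$,
\begin{equation*}
\mathbb E\,[X_h(x_i)X_h(x_j)]=\langle \ell_{x_i}^h,\ell_{x_j}^h\rangle_{L^2(D)}, \qquad \mathbb E\,[X(x_i)X(x_j)]=\langle \ell_{x_i},\ell_{x_j}\rangle_{L^2(D)}.
\end{equation*}
The first hypothesis reads exactly as $\Vert \ell_x^h-\ell_x\Vert_{L^2(D)}=\sup_{\Vert f\Vert_{L^2(D)}\le 1}|u_h^f(x)-u^f(x)|\to 0$, so the covariance matrices of $(X_h(x_1),\dots,X_h(x_n))$ converge to those of $(X(x_1),\dots,X(x_n))$. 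For centered Gaussian vectors convergence of covariance matrices is equivalent to convergence in distribution, so every finite-dimensional distribution of $X_h$ converges to that of $X$. Combining the two steps, every weakly convergent subsequence has a limit with the finite-dimensional distributions of $X$, hence equal to $\mu_X$, and uniqueness of the subsequential limit upgrades this to weak convergence of the whole family. I expect the tightness step to be the main obstacle — not conceptually, but because it is where the deterministic uniform H\"older estimate \eqref{eq:yksi} must be converted, through the Gaussian structure, into a moment bound strong enough to run Kolmogorov's criterion uniformly in $h$; the covariance computation is essentially forced once the first hypothesis is read as $L^2$-convergence of the representing functionals.
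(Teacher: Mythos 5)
Your proposal is correct and follows essentially the same route as the paper: the identical variance computation $\mathbb E|X_h(x)-X_h(y)|^2=\Vert T_h^*(\delta_x-\delta_y)\Vert_{L^2(D)}^2=\sup_{\Vert f\Vert_{L^2(D)}\leq 1}|u_h^f(x)-u_h^f(y)|^2$ feeding Kolmogorov's tightness criterion, plus convergence of the covariances $\langle T_h^*\delta_x,T_h^*\delta_y\rangle$ (your Riesz representers $\ell_x^h$) to identify the finite-dimensional limits. The only difference is cosmetic: you spell out the Gaussian upgrade to $2p$-th moments so that $2p\alpha>d$, a step the paper leaves implicit in its appeal to the Kolmogorov criterion.
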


\begin{proof}
It is clear from \eqref{eq:yksi} that $X_h$ has continuous realizations.

We show that  the finite-dimensional probability  distributions of $X_h$ converge to the finite-dimensional probability distributions of 
$X$,  and that the family of  probability distributions of $X_h$ on $C(\overline{D})$ is tight.

The convergence of finite-dimensional distributions of zero mean Gaussian random variables $ (X_h  ({x_1}), \dots , 
X_h ( {x_k}))$ depends only 
on the convergence of the covariances
$$
\mathbb E X_h (x)  X_ {h} ({y}). 
$$  
Note that  $X_{h}(x) =\langle X_h ,\delta_x\rangle$. 
Moreover, 
\begin{equation}\label{eq:pres}
X_h = \widehat T_h \dot W ,
\end{equation}
 where $T_h f$ solves the variational boundary value problem for $f\in L^2(D)$. 
Hence 
\begin{eqnarray*}
\mathbb E  X_ h ({x})  X_ {h}({y}) &=&  \mathbb E  \langle X_h ,\delta _x \rangle \langle X_h , \delta_y\rangle 
=  \mathbb  E \langle \dot W, T_h^* \delta _x \rangle \langle \dot W,  T_h^*\delta_y\rangle  
=  \langle T_h^* \delta_x, T_h ^*\delta _y\rangle. 
\end{eqnarray*}
We show that  $T_h^* \delta_{x}$ converge to $T^* \delta_{x}$ in  $L^2(D)$. 
Indeed, $$
\Vert T_h^* \delta_x - T ^* \delta_ x\Vert _{L^2(D)} = 
\sup_{\Vert f\Vert_{L^2(D)}\leq 1}   | u_h ^f (x) - u^f (x)|  
$$
which converges to zero by the assumptions. Therefore, the finite dimensional 
distributions converge.

Next, we verify  tightness by Kolmogorov's theorem (see \cite{schilling}). 
By linearity
\begin{equation}\label{eq:delta}
X_h (x) - X_h  (y)= \langle X_h , \delta_x- \delta_y\rangle.
\end{equation}
We insert \eqref{eq:pres}  and   \eqref{eq:delta} into  
\begin{eqnarray*}
\mathbb E |X_h(x) -X_h(y) |^2 &=& \mathbb E  \langle X_h  , \delta_x-\delta_y\rangle ^2 = \mathbb E  \langle \dot W , T_h^* (\delta_x-\delta_y)\rangle ^2\\
& =&   \Vert T^* _h ( \delta_x-\delta_y)\Vert ^2 _{L^2(D)}.
\end{eqnarray*}
 By the assumptions,  
\begin{eqnarray*} 
\mathbb E  | X_h(x) -X_h(y) |^2   = \sup_{\Vert f\Vert_{L^2(D)}\leq 1 }  |u_h^f (x)- u_h^f (y) |^2 \leq   C |x-y|^{2\alpha}.
\end{eqnarray*}
Similarly
$
\mathbb  E  |X_h(x)| ^2  \leq   C.
$
The identification of the limit with the probability distribution of $X$ is carried out with characteristic functions.
\qed\end{proof}

\section{Conclusions}

The presented methodology is an effective tool tailored for Gaussian problems and does not directly generalize to nonlinear elliptic problems. However, the principle idea of replacing the normal derivative at the boundary with a measurable mapping may  carry over to more general problems.

\begin{acknowledgements}
%If you^*d like to thank anyone, place your comments here
%and remove the percent signs.
This work has been funded 
by Academy of Finland (application number 250215, Finnish Programme for Centre of Excellence in Research 2012-2017) and European Research Council (ERC advanced grant 267700 - Inverse problems). 
\end{acknowledgements}

% BibTeX users please use one of
%\bibliographystyle{spbasic}      % basic style, author-year citations
\bibliographystyle{spmpsci}      % mathematics and physical sciences
%\bibliographystyle{spphys}       % APS-like style for physics
%\bibliography{}   % name your BibTeX data base

% Non-BibTeX users please use
\bibliography{mybib}{}

\end{document}